\newcommand{\goodchi}{\protect\raisebox{2pt}{$\chi$}}
\newcommand{\abs}   [1]{\left \vert #1 \right \vert}
\newcommand{\enorm} [1]{\left \Vert #1 \right \Vert}
\newcommand{\norm}  [2]{\left \Vert #1 \right \Vert_{#2}}
\newcommand{\diver} [0]{\nabla \cdot}
\newtheorem*{rep@theorem}{\rep@title}
\newcommand{\newreptheorem}[2]{%
\newenvironment{rep#1}[1]{%
 \def\rep@title{#2 \ref{##1}}%
 \begin{rep@theorem}}%
 {\end{rep@theorem}}}
\newtheorem{theorem}{Theorem}[section]
\newtheorem{lemma}[theorem]{Lemma}
\newtheorem{corollary}[theorem]{Corollary}
\newtheorem{remark}[theorem]{Remark}
\title[Piecewise smooth recovery]{Reconstruction and stability for piecewise smooth potentials in the plane}
\author{Jorge Tejero}
\begin{document}
\maketitle

\begin{abstract}
We show that complex-valued potentials with jump discontinuities 
can be recovered from the Dirichlet-to-Neumann map 
using Bukhgeim's method. Combining with known formulas, this enables the recovery from the scattering amplitude at a fixed energy. We also provide {\it a priori} stability
estimates for reconstruction from the Dirichlet \!\!-to-Neumann map 
as well as from the scattering amplitude
given an approximate knowledge of the
location of the discontinuities.
\end{abstract}

\section{Introduction}

Let $\Omega$ be a bounded planar domain that contains a bounded potential $V$, and consider the Dirichlet problem
\begin{equation}
\label{dirichletProblem} \left\{
\begin{array}{l}
\Delta u = V u\\
u \vert_{\partial \Omega} = f.
\end{array} \right.
\end{equation}
Supposing that 0 is not a Dirichlet eigenvalue, there is a unique solution $u\in H^1(\Omega)$ and
the DtN map $\Lambda_V$ can be formally defined by
\begin{equation}
\Lambda_V : f \rightarrow \nabla u \cdot \mathbf{n} \vert_{\partial \Omega}. \nonumber
\end{equation}
Our goal is then to recover the potential from 
the information contained in~$\Lambda_V$. This problem has a long history 
and is closely related to the inverse conductivity problem proposed
by Calder\'on; see \cite{C80}. Some relevant work in higher dimensions includes \cite{SU87, Na88, No88, BT03, KSU07, HT, H, CR}. 

The two dimensional question is quite different to the higher dimensional case;
for example the inverse problem is no longer overdetermined.
In \cite{Na96} Nachman introduced the $\overline{\partial}$-method to prove uniqueness 
for conductivities in $W^{2,p}$ with $p>1$,
and gave a reconstruction procedure
(this work has since been extended to more general cases, see for example
\cite{IN95, BU97}). 
In~\cite{AP06}, combining the $\overline{\partial}$-method with the theory of 
quasi-conformal maps, Astala and P\"{a}iv\"{a}rinta solved the uniqueness
problem for $L^\infty$ conductivities.

There was little progress for general potentials in the plane until Bukhgeim  introduced a new method, proving uniqueness for $C^1$-potentials \cite{B08}.
There he took advantage of solutions of the form
\begin{equation}
u = u_{\lambda, x} = e^{i \lambda \psi} (1 + w), \quad \psi(z) = \psi_x (z) = \frac{1}{2}\big(z_1 - x_1 + i (z_2 -  x_2)\big)^2, \nonumber
\end{equation}
where $w=w_{\lambda, x}$ is small in some sense. Indeed, after proving that the 
the integral
\begin{equation}
\int_\Omega e^{i \lambda (\psi + \overline{\psi})} \, V \, w \nonumber
\end{equation}
converges to zero  as $\lambda$ grows, one uses the stationary phase method to see that
the integral
$$ \int_{\partial \Omega} e^{i \lambda \overline{\psi}} (\Lambda_V - \Lambda_0) [u] = \int_\Omega e^{i \lambda (\psi + \overline{\psi})} \, V \, (1 + w) $$
converges to $\pi \lambda^{-1} V(x)$.

This quadratic phase approach
has since been extended by many authors; we list a few here.
Novikov and Santacesaria obtained stability estimates for~$C^2$-potentials in~\cite{NS10}
and proved that the reconstruction method could be extended to $C^1$ matrix-valued potentials in~\cite{NS11}. In \cite{BIY15}, Bl\r{a}sten, Imanuvilov and Yamamoto proved uniqueness
for potentials in $L^p$ with $p>2$, and gave stability estimates in the 
$L^2$ norm for potentials in $H^s$ with $s > 0$.
Finally, Astala, Faraco and Rogers gave a reconstruction
procedure for potentials in~$H^{1/2}$ and
proved that this is best possible in some sense; see \cite{AFR13}.

In the recent work \cite{LNV15}, Lakshtanov, Novikov and 
Vainberg provided a reconstruction scheme for real bounded potentials. 
Their procedure relies on Faddeev's scattering solutions and allows 
to recover almost each potential, in the sense of their Remark 4.1.

Here we will prove that the reconstruction formula given
in~\cite{AFR13} (mildly different from the original Bukhgeim formula) works for {\it piecewise $W^{2,1}$-potentials with
jump discontinuities on smooth curves}. That is to say potentials $V$ that can be written as \begin{equation}
V(x) = \sum_{j=1}^N q_j(x) \goodchi_{\Omega_j} (x), \nonumber
\end{equation}
where $q_j \in W^{2, 1}(\Omega_j)$ and $\Omega_j$ 
are piecewise $C^{2,\alpha}$-domains with $\alpha >1/2$. By this we mean, the boundary $\partial\Omega_j$  can be expressed as a finite union of graphs of $C^{2, \alpha}$-functions and that the union is Lipschitz. The most significant novelties of the article are to be found in Section~\ref{rec}, where we will prove the following theorem. There we will also present a potential for which the recovery formula fails at points away from the discontinuities.

\begin{theorem} \label{reconstructionTheorem0}
Let $V$ be a piecewise $W^{2,1}$-potential with
jump discontinuities on smooth curves. Then
\begin{equation}
\lim_{\lambda \rightarrow \infty } \frac{\lambda}{\pi} \int_{\partial \Omega} e^{i \lambda \overline{\psi}} \left( \Lambda_V - \Lambda_0 \right) \left[ u_{\lambda,x} \right] = V(x),\quad \text{a.e.}\ x\in\Omega. \nonumber
\end{equation}
\end{theorem}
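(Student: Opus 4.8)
The plan is to follow the Bukhgeim/Astala–Faraco–Rogers strategy and reduce the theorem to two estimates: first that the boundary integral $\int_{\partial\Omega}e^{i\lambda\overline\psi}(\Lambda_V-\Lambda_0)[u_{\lambda,x}]$ equals the bulk integral $\int_\Omega e^{i\lambda(\psi+\overline\psi)}\,V\,(1+w_{\lambda,x})$ via Green's identity (this is classical and does not use the structure of $V$); and second that the correction term $\int_\Omega e^{i\lambda(\psi+\overline\psi)}\,V\,w_{\lambda,x}$ tends to $0$, so that the limit is governed by $\int_\Omega e^{i\lambda(\psi+\overline\psi)}\,V$. The main term is handled by stationary phase: writing $\psi_x+\overline{\psi_x}$ as a nondegenerate quadratic with a single critical point at $z=x$, one expects $\frac{\lambda}{\pi}\int_\Omega e^{i\lambda(\psi+\overline\psi)}V \to V(x)$ at every Lebesgue point of $V$. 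Here the piecewise-$W^{2,1}$ hypothesis enters: away from the discontinuity curves $V$ is $W^{2,1}$ hence continuous, so those points pose no problem; the delicate set is $\bigcup_j\partial\Omega_j$, which has measure zero, so "a.e.\ $x$" is exactly what one can hope for, and indeed the example promised in Section~\ref{rec} shows the formula can genuinely fail on a null set.

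The heart of the matter is controlling $w=w_{\lambda,x}$. Following \cite{AFR13}, $w$ solves a $\overline\partial$-type equation and can be written through the Faddeev-type solid Cauchy operator with the conjugated phase; one needs a decay bound of the form $\|w_{\lambda,x}\|\to 0$ in a norm strong enough that $\int_\Omega e^{i\lambda(\psi+\overline\psi)}Vw\to 0$. The standard route is: (i) an $L^p$ bound for the operator $f\mapsto e^{-i\lambda(\psi+\overline\psi)}\,\overline\partial^{-1}(e^{i\lambda(\psi+\overline\psi)}f)$ with a gain in $\lambda$ — the Astala–Faraco–Rogers estimate gives a $\lambda^{-\varepsilon}$-type gain valid for $V\in H^{1/2}$ — and (ii) a Neumann series argument giving $w$ once $\lambda$ is large. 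So the real work is to check that a piecewise $W^{2,1}$-potential with jumps across piecewise $C^{2,\alpha}$ curves, $\alpha>1/2$, lies in the relevant function space (or is close enough to it): a function that is $W^{2,1}$ on each piece and jumps across a $C^{1,\alpha}$-ish interface belongs to $H^s$ for $s<1/2$, which is the Besov/Sobolev borderline where the \cite{AFR13} machinery still delivers a quantitative decay. One should make this embedding precise — localize near the interface, flatten it using the $C^{2,\alpha}$ graph parametrization, and estimate the fractional Sobolev norm of a jump, which is finite precisely below exponent $1/2$.

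I expect the main obstacle to be exactly this borderline regularity bookkeeping near the discontinuity curves: showing that the $\overline\partial$-solvability and the $\lambda$-decay of $w$ survive when $V$ is only just barely outside $H^{1/2}$, and doing so uniformly (or almost uniformly) in the base point $x$, including when $x$ sits on or near an interface. One likely needs to split $V=V_{\mathrm{good}}+V_{\mathrm{jump}}$, treat $V_{\mathrm{good}}\in W^{2,1}$ by the already-known smooth theory, and handle $V_{\mathrm{jump}}$ — a sum of characteristic functions of nice domains times smooth weights — by exploiting the geometry of $\partial\Omega_j$ (the $C^{2,\alpha}$, $\alpha>1/2$, hypothesis is presumably what makes the interface contribution's fractional-Sobolev norm finite and what controls the stationary-phase remainder along the curve). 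A secondary technical point is the stationary phase step itself at a Lebesgue point of a merely $L^\infty$ (not continuous) function: one argues by approximating $V$ in $L^1_{\mathrm{loc}}$ by continuous functions and using that the rescaled kernel $\tfrac{\lambda}{\pi}e^{i\lambda(\psi+\overline\psi)}$ behaves like an approximate identity in a Lebesgue-differentiation sense, which pins down the "a.e." in the statement.
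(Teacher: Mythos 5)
Your reduction to the bulk integral and your treatment of the correction term $\int_\Omega e^{i\lambda(\psi+\overline\psi)}Vw$ match the paper (the operator bounds of \cite{AFR13} apply once one checks $V\in H^r$ for some $r<1/2$, which follows from the generalised Leibniz rule and $\goodchi_{\Omega_j}\in H^r$; this is routine bookkeeping, not the borderline difficulty you anticipate). The genuine gap is in your treatment of the main term. You claim that $\frac{\lambda}{\pi}\int_\Omega e^{i\lambda(\psi+\overline\psi)}V\to V(x)$ at every Lebesgue point of $V$, with the exceptional set being $\cup_j\partial\Omega_j$. This is false, and the paper's rhombus example refutes it: for $V=\goodchi_{\Omega_1}$ with $\Omega_1$ the rhombus with vertices $(0,0),(1,1),(2,0),(1,-1)$, the formula fails at every $x=(-t,-t)$, $t\in(0,2)$, points where $V$ vanishes identically in a neighbourhood and which are therefore Lebesgue points. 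The oscillatory kernel $\frac{\lambda}{\pi}e^{i\lambda(\psi+\overline\psi)}$ is not an approximate identity, so the $L^1_{\mathrm{loc}}$-approximation and Lebesgue-differentiation argument you sketch cannot close: the jump across $\partial\Omega_j$ produces, after two applications of Green's identity, a boundary term $\frac{1}{i\pi}\int_{\partial\Omega_j}e^{i\lambda\phi_x}\,q_j\,\frac{\nabla\phi_x\cdot\mathbf{n}}{\enorm{\nabla\phi_x}^2}$ carrying no negative power of $\lambda$, which decays only if the restricted phase $\phi_x\vert_{\partial\Omega_j}$ has finitely many stationary points, all of order one.

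What is missing from your proposal is therefore the paper's central geometric input: the splitting $\int_\Omega e^{i\lambda\phi_x}V=\sum_j\int_{\Omega_j}e^{i\lambda\phi_x}q_j$, double integration by parts on the pieces not containing $x$, a one-dimensional van der Corput estimate for the restricted phase on each $C^2$ component of $\partial\Omega_j$, and the measure-theoretic fact (Lemmas \ref{tangentLinesArea} and \ref{noRecoveryZeroMeasure}) that the set of base points $x$ for which $\phi_x\vert_{\mathcal{C}}$ has a stationary point of order greater than one is a closed null set. This is also where the hypothesis $\alpha>1/2$ is actually used --- to show that the image of the $C^{0,\alpha}$ curve $z_1\mapsto G(z_1)$ parametrizing the degenerate base points is Lebesgue-null --- not, as you suggest, to control the fractional Sobolev norm of the jump (that only requires Lipschitz boundaries). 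Finally, for the piece of $V$ supported near $x$ one localises with a cutoff so that $q_x\in H^1_0$ and invokes the almost-everywhere convergence theorem for the nonelliptic time-dependent Schr\"odinger equation \cite{RVV}; this, rather than an approximate-identity argument, is what pins down the ``a.e.'' at the $W^{2,1}$ endpoint.
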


We refer to Theorem~1.1 of \cite{AFR13} for how to determine the values of the Bukhgeim solutions on the boundary.

For a fixed $k>0$, we also consider the Schr\"odinger equation
\begin{equation}
(- \Delta+V) u = k^2 u, \label{scatteringEq}
\end{equation}
where $k^2$ is not a Dirichlet eigenvalue for the Hamiltonian $- \Delta+V $. For $\theta \in \mathbb{S}^1$, the outgoing scattering solutions
satisfy the Lippmann-Schwinger equation
$$ u(x, \theta) = e^{i k x \cdot \theta} - \int_{\mathbb{R}^2} G_0(x,y) V(y) u(y, \theta)\, dy,$$
where $G_0$ denotes the outgoing Green's function which satisfies
$$ (- \Delta - k^2) G_0(x,y) = \delta(x-y). $$
Then the scattering amplitude 
$A_V : \mathbb{S}^1 \times \mathbb{S}^1 \rightarrow \mathbb{C}$ 
at energy $k^2$ can be written
$$ A_V(\eta, \theta) = \int_{\mathbb{R}^2} e^{-i k \eta \cdot y} V(y) u(y, \theta)\, dy.$$
Given an incident plane wave in direction $\theta$, the scattering amplitude 
measures the probability of scattering in the direction $\eta$.
A classical problem is to recover the potential in \eqref{scatteringEq} from
the information contained in $A_V$.

By applying the formula obtained in \cite{AFR14}, we can also recover $V$ from $A_V$ as long as $V$ is a piecewise $W^{2,1}$-potential with
jump discontinuities on smooth curves. More details will be given in the third section.

Stability estimates are a classical theme in inverse problems; see \cite{A14, BIY15} for 
the Schr\"odinger equation and \cite{A88, BBR01, BFR07, CFR10, CGR13} for the conductivity equation. 
Notice that in \cite{BIY15} or in \cite{CFR10} there is stability for discontinuous potentials or conductivities but only in the $L^2$ sense. A careful analysis of the dependence of the constants in the reconstruction theorem yields an $L^\infty$ stability for discontinuous coefficients provided a noise knowledge of the discontinuities of the potential. 
In the final section we will prove stability estimates for the reconstruction from the DtN map and
from the information contained in $A_V$.

\section{Preliminaries}

\subsection{Quadratic phase solutions and integrals}

Using Wirtinger derivatives we can write the the time-independent 
Schr\"odinger equation as $ 4 \partial_z \partial_{\overline{z}}u=Vu$.
Taking solutions of the form $u = e^{i \lambda \psi} (1 + w)$ and multiplying
both sides by $e^{i \lambda \overline{\psi}}$ we obtain
$$ 4 e^{i \lambda \overline{\psi}} \partial_z \partial_{\overline{z}} e^{i \lambda \psi} (1 + w) = e^{i \lambda (\psi + \overline{\psi})} V (1 + w). $$
Taking into account that $ \partial_{\overline{z}} e^{i \lambda \psi} =  \partial_{z} e^{i \lambda \overline{\psi}} = 0 $,  this can be rewritten as
$$ 4 \partial_z e^{i \lambda (\psi + \overline{\psi})} \partial_{\overline{z}} w = e^{i \lambda (\psi + \overline{\psi})} V (1 + w).$$
As the derivatives are local operators, and we need only satisfying the equation inside $\Omega$, we can take solutions of the form
$$ w = \frac{1}{4} \partial_{\overline{z}}^{-1} \left[ e^{-i \lambda (\psi + \overline{\psi})} \, \goodchi_Q \, \partial_z^{-1} \left[ e^{i \lambda (\psi + \overline{\psi})} \, \goodchi_Q \, V \, (1 + w) \right] \right]$$
where $Q$ is an auxiliary axis-parallel square containing $\Omega$.
In order to simplify notation we define the multiplication operators
$$ M^{\pm \lambda} \left[ F \right] =  e^{\pm i \lambda (\psi + \overline{\psi})} \goodchi_Q F , $$
and write
$$ S^\lambda_1 \left[ F \right] = \frac{1}{4} \partial^{-1}_{\overline{z}} \circ M^{- \lambda} \circ  \partial^{-1}_{z} \circ M^{\lambda} , \quad
S^\lambda_V \left[ F \right] =  S^\lambda_1 \left[ V F \right].$$
For sufficiently smooth $V$, the operator norm of $S^\lambda_V$ is small for large enough~$\lambda$; see \cite[Lemma 2.3]{AFR13}, so  we can invert 
$(I - S^\lambda_V)$ using  Neumann series, yielding $$ w = w_{\lambda,x} = (I - S^\lambda_V)^{-1} S^\lambda_1 \left[ V \right]. $$
The following oscillatory integral operator will also be useful in the sequel
\begin{equation}
T^\lambda_w [F] (x) = \frac{\lambda}{\pi} \int_{\mathbb{R}^2} e^{i \lambda (\psi + \overline{\psi})} F(z) w_{\lambda,x}(z)\, dz. \nonumber
\end{equation}
In order to study the behaviour of these operators 
we will use the homogeneous $L^2$ Sobolev  space, 
denoted by $\dot{H}^s$, with norm 
$\norm{f}{\dot{H}^s}=\||\cdot|^s \hat{f}\|_{L^2},$
where $\hat{f}$ is the Fourier transform of $f$. 

We have the following bound for $M^{\pm \lambda}$; the proof can be found in 
\cite[Section 2]{AFR13}. The key ingredient in the proof is the classical lemma of
van der Corput.

\begin{lemma} \label{mNorm}
Let $0 \leq s_1, s_2 < 1$. Then 
$$ \norm{M^{\pm \lambda}[F]}{\dot{H}^{-s_2}} \leq C \lambda^{- \min \{ s_1, s_2 \}} \norm{F}{\dot{H}^{s_1}}, \quad \lambda \geq 1. $$
\end{lemma}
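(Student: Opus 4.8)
\medskip
\noindent\textit{Proposed proof.} I would begin with two soft reductions. After translating so that $x=0$ one has $\psi+\overline\psi=z_1^2-z_2^2$, a non-degenerate real quadratic form, so $M^{\pm\lambda}$ is multiplication by the unimodular chirp $e^{\pm i\lambda(z_1^2-z_2^2)}\goodchi_Q$; in particular $\norm{M^{\pm\lambda}[F]}{L^2}\le\norm{F}{L^2}$, which is the assertion when $\min\{s_1,s_2\}=0$. Since $M^{\pm\lambda}$ is a multiplication operator, its $L^2$-adjoint is $M^{\mp\lambda}$, hence $\norm{M^{\pm\lambda}}{\dot{H}^{s_1}\to\dot{H}^{-s_2}}=\norm{M^{\mp\lambda}}{\dot{H}^{s_2}\to\dot{H}^{-s_1}}$ and we may assume $s_1\le s_2$, i.e.\ $\min=s_1$. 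Writing $K_\lambda:=e^{\pm i\lambda(z_1^2-z_2^2)}\goodchi_Q$ with $Q$ an axis-parallel square, the phase and the cutoff factor over the two coordinates, so $\widehat{K_\lambda}(\zeta)$ is a product of two one-dimensional oscillatory integrals $\int_{I_j}e^{\pm i\lambda t^2-it\zeta_j}\,dt$; van der Corput's lemma (second derivative $\equiv 2\lambda$) together with a non-stationary-phase bound gives
\[
\abs{\widehat{K_\lambda}(\zeta)}\ \lesssim\ \lambda^{-1}\prod_{j=1}^{2}\Bigl(1+\tfrac{\abs{\zeta_j}}{\lambda}\Bigr)^{-1},
\]
uniformly in $\lambda\ge1$ and in the base point $x$.

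\smallskip
Next I would reduce to the \emph{diagonal} estimate $\norm{M^{\pm\lambda}[F]}{\dot{H}^{-s}}\lesssim\lambda^{-s}\norm{F}{\dot{H}^{s}}$, $0<s<1$. Granting it, set $h:=M^{\pm\lambda}[F]$ (supported in $Q$) and split $\norm{h}{\dot{H}^{-s_2}}^2$ into the regions $\abs{\xi}>1$ and $\abs{\xi}\le1$. On $\abs{\xi}>1$ one has $\abs{\xi}^{-2s_2}\le\abs{\xi}^{-2s_1}$, so that part is $\le\norm{h}{\dot{H}^{-s_1}}^2\lesssim\lambda^{-2s_1}\norm{F}{\dot{H}^{s_1}}^2$ by the diagonal estimate. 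On $\abs{\xi}\le1$, Cauchy--Schwarz in the convolution $\widehat h=c\,\widehat{K_\lambda}\ast\widehat F$ with the weight $\abs{\eta}^{s_1}$ gives $\abs{\widehat h(\xi)}^2\le\bigl(\int\abs{\widehat{K_\lambda}(\xi-\eta)}^2\abs{\eta}^{-2s_1}\,d\eta\bigr)\norm{F}{\dot{H}^{s_1}}^2$, and squaring the kernel bound (the decaying factors become integrable) shows the parenthesis is $\lesssim\lambda^{-2s_1}$ for $\abs{\xi}\le1$; since $s_2<1$ we have $\int_{\abs{\xi}\le1}\abs{\xi}^{-2s_2}\,d\xi<\infty$, so this part is also $\lesssim\lambda^{-2s_1}\norm{F}{\dot{H}^{s_1}}^2$.

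\smallskip
For the diagonal estimate my plan is a parabolic rescaling $z\mapsto\lambda^{-1/2}z$: since $\psi+\overline\psi$ is homogeneous of degree $2$ and the rescaled cutoff $\goodchi_{\lambda^{1/2}Q}$ is harmless (its $\dot{H}^{s}$-multiplier norm is dilation invariant for $0\le s<1/2$, and it is the identity on the compactly supported $F$ that occur in practice), this removes $\lambda$ and reduces matters to the $\lambda$-free bound $\norm{e^{\pm i(z_1^2-z_2^2)}g}{\dot{H}^{-s}}\lesssim\norm{g}{\dot{H}^{s}}$, $0<s<1$. Because the phase is a non-degenerate quadratic, a short Fourier computation (the exponential of a quadratic has a chirp for its Fourier transform) gives $\widehat{e^{\pm i(z_1^2-z_2^2)}g}(\zeta)=c\,e^{\mp i(\zeta_1^2-\zeta_2^2)/4}\,(\Theta g)(\zeta)$, where $\Theta$ is convolution with $e^{\pm i(z_1^2-z_2^2)/4}$, that is, a Fourier multiplier of constant modulus, so $\norm{\Theta g}{\dot{H}^{s}}\lesssim\norm{g}{\dot{H}^{s}}$; hence
\[
\norm{e^{\pm i(z_1^2-z_2^2)}g}{\dot{H}^{-s}}=c\,\norm{\,\abs{\cdot}^{-s}\Theta g}{L^2}\ \lesssim\ \norm{\Theta g}{\dot{H}^{s}}\ \lesssim\ \norm{g}{\dot{H}^{s}},
\]
the middle step being the classical Hardy inequality $\norm{\abs{\cdot}^{-s}f}{L^2}\lesssim\norm{f}{\dot{H}^{s}}$, valid precisely for $0\le s<1$ in the plane. (For $s\ge1/2$, where sharp truncation is no longer $\dot{H}^{s}$-bounded, I would instead use $\norm{M^{\pm\lambda}[F]}{\dot{H}^{-s}}^2=\langle F,\,M^{\mp\lambda}\abs{D}^{-2s}M^{\pm\lambda}F\rangle$ and estimate the integral operator with kernel $c_s\goodchi_Q(z)\goodchi_Q(w)\abs{z-w}^{2s-2}e^{\pm i\lambda(\psi_0(w)-\psi_0(z))}$, $\psi_0(y)=y_1^2-y_2^2$, whose quadratic phase difference is non-stationary off the diagonal; here $2s-2\in(-2,0)$, again $s<1$, keeps the Riesz kernel locally integrable.)

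\smallskip
The step I expect to be the real obstacle is the diagonal estimate, and specifically the fact that one must keep the oscillation of $\widehat{K_\lambda}$. Passing to absolute values—a Schur test on the operator with kernel $\abs{\xi}^{-s}\abs{\widehat{K_\lambda}(\xi-\eta)}\abs{\eta}^{-s}$, or the product rule $\dot{H}^{s}\!\cdot\!\dot{H}^{s}\hookrightarrow\dot{H}^{2s-1}$ applied to $\langle M^{\pm\lambda}F,G\rangle$, or crude Cauchy--Schwarz as above without cancellation—yields no better than the trivial power $\lambda^{0}$ and misses the gain $\lambda^{-s}$ for every $s<1$; the cancellation in the quadratic chirp, extracted by van der Corput / stationary phase or, equivalently, by the metaplectic structure exploited above, is indispensable. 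Everything else (the reductions, Hardy's inequality, the bookkeeping with $\goodchi_Q$) is routine, and all constants stay uniform in $x$ because van der Corput's constants do not see the translation.
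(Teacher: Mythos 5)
The paper does not actually prove this lemma --- it is quoted from \cite[Section 2]{AFR13} with the remark that van der Corput's lemma is the key ingredient --- so the comparison is against that reference rather than an in-text argument. Most of what you do is sound and correctly identifies where the cancellation must be extracted. The duality reduction to $s_1\le s_2$ is fine (the $L^2$-adjoint of $M^{\pm\lambda}$ is $M^{\mp\lambda}$ and the claimed bound is symmetric in $(s_1,s_2)$); the van der Corput/non-stationary-phase bound on $\widehat{\goodchi_Q e^{\pm i\lambda\phi}}$ is correct (indeed weaker than what is provable, which is all you need); the passage from the diagonal case to the off-diagonal case by splitting $\abs{\xi}\gtrless 1$ works, and your low-frequency Cauchy--Schwarz also quietly repairs the slightly too quick opening claim that the $L^2$ bound ``is the assertion when $\min=0$'' (for general $h$, $\norm{h}{\dot H^{-s_2}}$ is not dominated by $\norm{h}{L^2}$; here one needs either the compact support of $M^{\pm\lambda}F$ or exactly the kernel estimate you use). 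For $0<s<1/2$ the diagonal estimate via parabolic rescaling, the metaplectic identity and Hardy's inequality is correct, with the right bookkeeping of the $\dot H^{s}$ dilation exponents, and is an attractive way to package the stationary-phase information.

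The genuine gap is the range $1/2\le \min\{s_1,s_2\}<1$, and it is not a corner case for this paper: Lemma \ref{sNorm} is applied with $s_2=1/2$ inside Lemmas \ref{twwNorm} and \ref{uBound}, and its proof then invokes the present lemma with $s_1=s_2=1/2$. Your rescaling argument requires $\norm{\goodchi_{\lambda^{1/2}Q}\,G}{\dot H^{s}}\lesssim\norm{G}{\dot H^{s}}$ uniformly in $\lambda$, which you rightly restrict to $s<1/2$; at $s=1/2$ multiplication by the characteristic function of a half-plane, hence of a square, is unbounded on $\dot H^{1/2}$, and the escape hatch ``$\goodchi_{\lambda^{1/2}Q}$ is the identity on the compactly supported $F$ that occur in practice'' is not available, because the lemma is stated and used for arbitrary $F\in\dot H^{s_1}$ --- the cutoff is part of the operator precisely because $F$ need not be supported in $Q$. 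The proposed fallback is circular as written: bounding $\langle F,\,M^{\mp\lambda}\abs{D}^{-2s}M^{\pm\lambda}F\rangle$ by $\lambda^{-2s}\norm{F}{\dot H^{s}}^2$ is the statement $\norm{T^*T}{L^2\to L^2}\le\lambda^{-2s}$ for $T=\abs{D}^{-s}M^{\pm\lambda}\abs{D}^{-s}$, which by $\enorm{T}^2=\enorm{T^*T}$ is \emph{equivalent} to the estimate being proved; exhibiting the kernel $c_s\goodchi_Q(z)\goodchi_Q(w)\abs{z-w}^{2s-2}e^{\pm i\lambda(\phi(w)-\phi(z))}$ and noting that the phase difference is non-stationary off the diagonal does not by itself produce the $\lambda^{-2s}$ gain once the two outer factors of $\abs{D}^{-s}$ are restored. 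To close the argument you must either genuinely estimate that fourfold (Riesz--chirp--Riesz) kernel, or prove the bilinear form bound $\abs{\int_Q e^{\pm i\lambda\phi}FG}\lesssim\lambda^{-\min\{s_1,s_2\}}\norm{F}{\dot H^{s_1}}\norm{G}{\dot H^{s_2}}$ directly --- for instance after a dyadic frequency decomposition of $F$ and $G$, which is the standard way to absorb a sharp cutoff at and above $s=1/2$ and is consistent with the van der Corput route indicated in \cite{AFR13}. Until one of these is carried out, your proof covers only $\min\{s_1,s_2\}<1/2$.
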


The following two lemmas were essentially proven in \cite[Sections 2 and~4]{AFR13}; 
we present minor modifications, suitable for the stability analysis of the final section.

\begin{lemma} \label{sNorm}
Let $0 < s_1, s_2 < 1$. Then there exists a constant $C$ such that
\begin{equation}
\norm{S^\lambda_1}{\dot{H}^{s_1} \rightarrow \dot{H}^{s_2}} \leq \, C \, \lambda^{-\tau} \nonumber
\end{equation} 
where $ \tau = 1 - s_2 + \min \lbrace s_1, s_2 \rbrace$.
\end{lemma}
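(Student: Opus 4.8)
The plan is to unwind the definition of $S^\lambda_1 = \tfrac14 \partial_{\overline z}^{-1}\circ M^{-\lambda}\circ\partial_z^{-1}\circ M^{\lambda}$ and estimate the composition factor by factor, using Lemma~\ref{mNorm} for the two multiplication operators and the elementary mapping properties of the solid Cauchy transforms $\partial_z^{-1}$ and $\partial_{\overline z}^{-1}$ on homogeneous Sobolev spaces. First I would record that, since $\partial_z$ and $\partial_{\overline z}$ are Fourier multipliers of order one, their inverses gain one derivative: $\|\partial_z^{-1}g\|_{\dot H^{t+1}}\lesssim\|g\|_{\dot H^{t}}$ and likewise for $\partial_{\overline z}^{-1}$, for any admissible $t$ (here on a bounded domain, so compact support lets us ignore low-frequency issues, exactly as in \cite[Sections 2 and~4]{AFR13}). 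The cutoff $\goodchi_Q$ hidden inside $M^{\pm\lambda}$ keeps everything compactly supported so these homogeneous estimates are legitimate.

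Then I would chain the four estimates. Starting from $F\in\dot H^{s_1}$: apply $M^\lambda$, then $\partial_z^{-1}$, then $M^{-\lambda}$, then $\partial_{\overline z}^{-1}$, landing in $\dot H^{s_2}$. The only real choice is the intermediate exponents. A clean route is to apply Lemma~\ref{mNorm} to $M^\lambda$ with input regularity $s_1$ and output regularity chosen to be $1-s_2$ so that, after the two Cauchy transforms contribute $+2$ to the regularity and $M^{-\lambda}$ is applied in between at a regularity strictly below $1$, the final output sits at level $s_2$. Concretely one gets a $\lambda^{-\min\{s_1,1-s_2\}}$ from the first $M$, a trivial factor from $\partial_z^{-1}$, then a second application of Lemma~\ref{mNorm} to $M^{-\lambda}$ producing a further $\lambda^{-\min\{\cdot,\cdot\}}$, and a trivial factor from $\partial_{\overline z}^{-1}$; optimizing the free intermediate parameter collapses the product of the two powers of $\lambda$ to the single exponent $\tau = 1-s_2+\min\{s_1,s_2\}$ claimed in the statement. (One should double-check the endpoint cases $\min\{s_1,s_2\}=s_1$ versus $=s_2$ separately, as the optimal splitting differs.)

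The main obstacle, and the place to be careful, is respecting the constraint $0\le s_1,s_2<1$ in Lemma~\ref{mNorm} at \emph{every} application: the intermediate regularities fed into the two multiplication operators must themselves lie in $[0,1)$, so the bookkeeping of exponents is not entirely free and one must verify that the optimal choice does not push an intermediate index to $1$ or beyond. A secondary technical point is the passage between homogeneous and inhomogeneous norms forced by the solid Cauchy transform of a compactly supported function — this is routine given the cutoff $\goodchi_Q$ but needs a sentence. Everything else is a direct composition of already-available estimates, so the proof should be short once the exponent optimization is pinned down.
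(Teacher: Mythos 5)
Your strategy is exactly the paper's: gain one derivative from each of $\partial_z^{-1}$ and $\partial_{\overline z}^{-1}$ and apply Lemma~\ref{mNorm} to the two multiplication operators, so the only content is the choice of intermediate regularities. The one point you left open is the point where your tentative choice goes wrong: sending $M^\lambda:\dot H^{s_1}\to\dot H^{-(1-s_2)}$ yields the total exponent $\min\{s_1,1-s_2\}+\min\{s_2,1-s_2\}$, which is strictly smaller than $\tau$ whenever $s_2>1/2$ (e.g.\ $s_1=s_2=0.8$ gives $0.4$ instead of $1$). The correct splitting, which is the one the paper uses and which works uniformly with no case analysis on $\min\{s_1,s_2\}$, is $M^\lambda:\dot H^{s_1}\to\dot H^{-s_2}$ (factor $\lambda^{-\min\{s_1,s_2\}}$), then $\partial_z^{-1}:\dot H^{-s_2}\to\dot H^{1-s_2}$, then $M^{-\lambda}:\dot H^{1-s_2}\to\dot H^{-(1-s_2)}$ (factor $\lambda^{-(1-s_2)}$, since here both indices equal $1-s_2$), then $\partial_{\overline z}^{-1}$ lands in $\dot H^{s_2}$; both intermediate indices $s_2$ and $1-s_2$ lie in $(0,1)$, so Lemma~\ref{mNorm} applies at each step and the product of the two powers is exactly $\lambda^{-\tau}$. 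With that substitution your argument is the paper's proof.
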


\begin{proof}
Using Lemma \ref{mNorm} twice we get
\begin{eqnarray}
\norm{S^\lambda_1}{\dot{H}^{s_1} \rightarrow \dot{H}^{s_2}} & \leq & \norm{M^{-\lambda} \circ \partial_z^{-1} \circ M^\lambda}{\dot{H}^{s_1} \rightarrow \dot{H}^{s_2 - 1}}  \nonumber \\
& \leq & C \, \lambda^{-1 + s_2} \norm{\partial_z^{-1} \circ M^\lambda}{\dot{H}^{s_1} \rightarrow \dot{H}^{1 - s_2}} \nonumber \\
& \leq & C \, \lambda^{-1 + s_2} \norm{M^\lambda}{\dot{H}^{s_1} \rightarrow \dot{H}^{- s_2}}  \nonumber \\
& \leq & C \, \lambda^{-\tau}, \nonumber
\end{eqnarray}
and the proof is concluded.
\end{proof}

\begin{lemma} \label{twNorm}
Let $F, V \in \dot{H}^{s}$ where $0 < s < 1$. Then there exists a constant~$C$ such that 
\begin{equation}
\sup_{x \in \Omega} \abs{T^\lambda_{w}[F](x)} \leq C \lambda^{-s} \norm{F}{\dot{H}^{s}} \norm{V}{\dot{H}^{s}} \nonumber
\end{equation}
when $\lambda$ is sufficiently large.
\end{lemma}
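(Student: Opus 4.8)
The idea is to unwind the definition of $T^\lambda_w$ and reduce it to a composition of the operators $M^{\pm\lambda}$, $\partial_z^{-1}$, $\partial_{\overline z}^{-1}$ already controlled by Lemmas \ref{mNorm} and \ref{sNorm}, together with the Neumann-series bound for $(I-S^\lambda_V)^{-1}$. Recall that $w=w_{\lambda,x}=(I-S^\lambda_V)^{-1}S^\lambda_1[V]$, so the quantity to estimate is
\begin{equation}
T^\lambda_w[F](x)=\frac{\lambda}{\pi}\int_{\mathbb R^2} e^{i\lambda(\psi+\overline\psi)}\,F(z)\,w_{\lambda,x}(z)\,dz. \nonumber
\end{equation}
The first step is to recognize the integral as a dual pairing: up to the constant $\lambda/\pi$ it equals $\langle M^\lambda[F],\,w_{\lambda,x}\rangle$, and since $w_{\lambda,x}$ is built by applying $\partial_{\overline z}^{-1}\circ M^{-\lambda}\circ\partial_z^{-1}\circ M^\lambda$ (and then a bounded Neumann operator) to $V$, I would move these operators onto the $F$-side by duality. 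The adjoint of $\partial_z^{-1}$ is (a constant times) $\partial_{\overline z}^{-1}$ and conversely, while $M^{\pm\lambda}$ is symmetric, so the pairing becomes $\langle S^\lambda_1[F],\,(I-S^\lambda_V)^{-*}[\,\cdot\,]\rangle$ paired against $V$ — schematically $T^\lambda_w[F](x)=\tfrac{\lambda}{\pi}\big\langle\, (I-S^\lambda_V)^{-1}\text{-type operator}\,[S^\lambda_1[F]],\ V\,\big\rangle$ up to harmless constants and conjugations.

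The second step is the quantitative estimate. By Cauchy--Schwarz in the $\dot H^{s}$--$\dot H^{-s}$ duality, $|T^\lambda_w[F](x)|\lesssim \lambda\,\|\widetilde S^\lambda_1[F]\|_{\dot H^{-s}}\,\|V\|_{\dot H^{s}}$ where $\widetilde S^\lambda_1$ is $S^\lambda_1$ composed with the bounded inverse $(I-S^\lambda_V)^{-1}$ (or its adjoint). For $\lambda$ large, Lemma \ref{sNorm} (with the operator-norm smallness of $S^\lambda_V$ from \cite[Lemma 2.3]{AFR13} making the Neumann series bounded uniformly) gives $\|\widetilde S^\lambda_1[F]\|_{\dot H^{-s}}\le C\|S^\lambda_1[F]\|_{\dot H^{-s}}\le C\lambda^{-\tau}\|F\|_{\dot H^{s}}$ with $\tau=1-(-s)+\min\{s,-s\}$... here one must be careful: the target exponent is $-s$, so applying Lemma \ref{sNorm} with $s_2=-s$ is not literally allowed since it requires $s_2>0$. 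The clean route is instead to keep one copy of $M^{\pm\lambda}$ acting on $F$ and bound $\|M^\lambda[F]\|_{\dot H^{-s}}\le C\lambda^{-s}\|F\|_{\dot H^{s}}$ directly by Lemma \ref{mNorm}, and absorb the remaining $\partial^{-1}$ and $M^{-\lambda}$ factors (which map $\dot H^{-s}$ boundedly into spaces pairing with $V\in\dot H^{s}$) using Lemmas \ref{mNorm}--\ref{sNorm}; the surplus power of $\lambda$ from those inner factors cancels the $\lambda$ in front, leaving exactly $\lambda^{-s}$.

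The main obstacle — and the point requiring genuine care rather than bookkeeping — is tracking the Sobolev exponents through the chain $M^\lambda,\partial_z^{-1},M^{-\lambda},\partial_{\overline z}^{-1}$ so that (i) every intermediate space has its regularity index strictly between $-1$ and $1$ as required by Lemma \ref{mNorm}, (ii) the endpoint pairing with $V\in\dot H^{s}$ is legitimate, and (iii) the accumulated powers of $\lambda$ sum to $1-s$ so that after multiplying by the prefactor $\lambda$ one is left with $\lambda^{-s}$. One also needs the uniformity in $x\in\Omega$: the operators $M^{\pm\lambda}$ depend on $x$ only through the phase $\psi_x$, and the bounds in Lemma \ref{mNorm} are uniform in the translation parameter, so taking the supremum over $x$ costs nothing. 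Assembling these pieces yields the claimed bound $\sup_{x\in\Omega}|T^\lambda_w[F](x)|\le C\lambda^{-s}\|F\|_{\dot H^{s}}\|V\|_{\dot H^{s}}$ for $\lambda$ large.
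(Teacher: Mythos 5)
Your plan is correct and, once you discard the initial adjoint/duality detour in favour of what you call the ``clean route,'' it is exactly the paper's proof: bound $\abs{T^\lambda_w[F](x)}\le \tfrac{\lambda}{\pi}\norm{M^\lambda[F]}{\dot{H}^{-s}}\norm{w}{\dot{H}^{s}}$ by the $\dot{H}^{-s}$--$\dot{H}^{s}$ pairing, get $\lambda^{-s}$ from Lemma \ref{mNorm}, and get the cancelling $\lambda^{-1}$ from the uniform boundedness of $(I-S^\lambda_V)^{-1}$ together with Lemma \ref{sNorm} applied with $s_1=s_2=s$ (so $\tau=1$) to $\norm{S^\lambda_1[V]}{\dot{H}^{s}}$. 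Your worry about applying Lemma \ref{sNorm} with a negative target exponent is well taken, and the route you settle on correctly avoids it.
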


\begin{proof}
Using Lemma \ref{mNorm} we obtain
\begin{eqnarray}
\abs{T^\lambda_{w}[F](x)} & \leq & C \, \lambda \norm{M^\lambda[F]}{\dot{H}^{-s}} \norm{w}{\dot{H}^{s}} \nonumber \\
& \leq & C \, \lambda^{1-s} \norm{F}{\dot{H}^{s}} \norm{(I - S_V^\lambda)^{-1} S_1^\lambda [V]}{\dot{H}^{s}}. \nonumber
\end{eqnarray}
As $(I - S_V^\lambda)^{-1}$ is bounded for $\lambda$ sufficiently large
(see \cite[Lemma 2.3]{AFR13}), 
\begin{eqnarray}
\abs{T^\lambda_{w} [F](x)} & \leq & C \lambda^{1-s} \norm{F}{\dot{H}^{s}} \norm{S_1^\lambda [V]}{\dot{H}^{s}} \nonumber \\
& \leq & C \lambda^{-s} \norm{F}{\dot{H}^{s}} \norm{V}{\dot{H}^{s}}, \nonumber
\end{eqnarray}
where the last inequality comes from applying Lemma \ref{sNorm}.	
\end{proof}

Let $g : \mathbb{R} \rightarrow \mathbb{R} \in C^n$. We say that
$x_s$ is a stationary point of $g$ of order $m < n$ 
if $g^{(k)}(x_s) = 0$ for $1 \leq k \leq m$ and 
$g^{(m+1)}(x_s) \neq 0$. We say that $g$ has stationary points if such points exist. The following lemma describes the asymptotic behaviour of 
a one dimensional oscillatory integral with a $C^2$ phase when 
there are only a finite number of stationary points of order one.

\begin{lemma} \label{statPhase1d}
Let $h \in W^{1,1}([a, b])$ and $g \in C^{2}([a, b])$ be such that
$g$ has only a finite number of stationary points of order at most one. Then there exists a constant $C$, independent of $h$ and depending
continuously on  the $C^2$ norm of $g$, such that
$$ \left \vert \int_a^b e^{i \lambda g(x)} h(x) dx \right \vert \leq C \lambda^{-1/2}\norm{h}{W^{1,1}([a,b])}$$
for $\lambda > 1$.
\end{lemma}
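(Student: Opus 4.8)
The plan is to reduce the estimate to the well-understood case of a phase with no stationary points by isolating small neighbourhoods of the (finitely many) stationary points, and to handle those neighbourhoods directly using van der Corput. Write $\int_a^b e^{i\lambda g}h = \int e^{i\lambda g}h\,\eta + \int e^{i\lambda g}h\,(1-\eta)$, where $\eta$ is a smooth cutoff supported in $\bigcup_j (x_j-\delta, x_j+\delta)$ and equal to one near each stationary point $x_j$, with $\delta$ chosen small enough (depending only on the $C^2$ norm of $g$ and the separation of the stationary points) that $g'$ vanishes at most once in each component and the intervals are disjoint and inside $[a,b]$. On the support of $1-\eta$ we have $|g'|\geq c>0$, so on each subinterval we integrate by parts once: $\int e^{i\lambda g}h(1-\eta) = \frac{1}{i\lambda}\int e^{i\lambda g}\,\partial_x\!\big(\frac{h(1-\eta)}{g'}\big)$, and since $h\in W^{1,1}$, $g\in C^2$ with $|g'|$ bounded below, the integrand is in $L^1$ with norm controlled by $C\|h\|_{W^{1,1}}$ where $C$ depends continuously on $\|g\|_{C^2}$. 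This contributes $O(\lambda^{-1})$, which is better than needed.

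The main work is the stationary part. On a single component $I_j=(x_j-\delta,x_j+\delta)$ we have $|g''|\geq c_j>0$ throughout (after shrinking $\delta$), so by van der Corput's lemma, for any fixed $x\in I_j$,
\begin{equation}
\left|\int_{x_j-\delta}^{x} e^{i\lambda g(t)}\,dt\right|\leq C\lambda^{-1/2},\nonumber
\end{equation}
with $C$ depending only on the lower bound for $|g''|$, hence continuously on $\|g\|_{C^2}$. To pass from this oscillatory-integral bound on characteristic functions to a bound against the amplitude $h\eta$, write $G(x)=\int_{x_j-\delta}^x e^{i\lambda g(t)}\,dt$ and integrate by parts: $\int_{I_j} e^{i\lambda g}h\eta = [G\,h\eta]_{\partial I_j} - \int_{I_j} G\,(h\eta)'$. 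The boundary term vanishes since $\eta$ is compactly supported in $I_j$, and $|{\int_{I_j} G (h\eta)'}|\leq \|G\|_\infty \|(h\eta)'\|_{L^1}\leq C\lambda^{-1/2}\|h\|_{W^{1,1}(I_j)}$, using that $(h\eta)'=h'\eta+h\eta'$ with $\eta,\eta'$ bounded by a constant depending only on $\delta$. Summing over the finitely many $j$ and adding the non-stationary contribution gives the claimed bound with a constant depending continuously on $\|g\|_{C^2}$ (through the uniform lower bounds on $|g'|$ off the stationary set and on $|g''|$ near it, and through $\delta$, which itself is chosen as a continuous function of $\|g\|_{C^2}$).

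The one delicate point — and the place I would be most careful — is justifying that $\delta$ and all the implied constants can be taken to depend \emph{continuously} on $\|g\|_{C^2}$ rather than on the finer structure (e.g. the exact locations of the $x_j$ or a lower bound on $|g'''|$). Here one uses that a stationary point of order at most one means $g''(x_j)\neq 0$; since $g''$ is continuous, $|g''|\geq \tfrac12|g''(x_j)|$ on a neighbourhood whose size is controlled by the modulus of continuity of $g''$, which in turn is controlled by $\|g\|_{C^2}$ on the compact interval. Away from the $x_j$, continuity of $g'$ and compactness give a uniform lower bound on $|g'|$; the subtlety is only that this bound is not a priori continuous in $\|g\|_{C^2}$ alone, but for the application in this paper the phase $g$ is essentially fixed (a restriction of $\psi+\overline\psi$ to a curve), so one may simply record the dependence as ``continuous in the $C^2$ norm along the relevant one-parameter family,'' which is what the statement asserts. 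Once these uniformities are in place, the two contributions combine to give the $\lambda^{-1/2}$ bound.
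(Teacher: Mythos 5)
Your proof is correct and follows essentially the same route as the paper: isolate neighbourhoods of the stationary points on which $|g''|$ is bounded below and apply van der Corput there, integrate by parts once on the complement where $|g'|$ is bounded below, and combine via $\|h\|_{L^\infty}\leq C\|h\|_{W^{1,1}}$. The only cosmetic difference is that the paper invokes the amplitude version of van der Corput directly (Grafakos, Corollary 2.6.8) where you re-derive it by integrating $h\eta$ against $G(x)=\int e^{i\lambda g(t)}\,dt$; your caveat about the continuity of the constant in the $C^2$ norm is a fair reading of what the paper itself asserts rather briefly.
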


\begin{proof}
Let  $\lbrace s_j \rbrace_{j=1}^N$ denote the stationary points and $\delta = \min_j (\vert g''(s_j) \vert)$. Let $\epsilon$ 
be such that $\vert g''(x) \vert > \delta/2$ 
for all $x \in \cup_j U_j$,
where $U_j = (u_j^d, u_j^u) = B_\epsilon(s_j) \cap [a,b]$. 
Then we can make use of a version of
Van der Corput's lemma (see \cite[Corollary 2.6.8]{G08}), to obtain
$$ \left \vert \int_{U_j} e^{i \lambda g(x)} h(x) dx \right \vert \leq 24 \left( \frac{\delta}{2} \right)^{-1/2} \lambda^{-1/2} \left( \abs{h(u_j^u)} + \int_{U_j} \abs{h'(x)} dx \right). $$ 

Now let $V_j = (v_j^d, v_j^u)$ denote each of the remaining segments of $[a,b]$, 
such that $\cup_j V_j = [a,b] \setminus \cup_j U_j$.
Integrating by parts in each $V_j$ we obtain
\begin{eqnarray}
\int_{V_j} e^{i \lambda g(x)} h(x) dx 
&=& \frac{1}{i \lambda} \left[ \frac{e^{i \lambda g(x)} h(x)}{g'(x)} \right]_{v_j^d}^{v_j^u} \nonumber \\
&&- \frac{1}{i \lambda} \int_{V_j} e^{i \lambda g(x)} \frac{d}{dx} \left( \frac{h(x)}{g'(x)} \right) dx. \nonumber
\end{eqnarray} 
By Sobolev embedding \cite[Theorem 4.12, Part 1, Case A]{AF03} 
we have that $\norm{h}{L^\infty [a,b]} \leq C\norm{h}{W^{1,1}[a,b]}$.
Making use of this and H\"older's inequality, altogether  we obtain
\begin{eqnarray}
\left \vert \int_a^b e^{i \lambda g(x)} h(x) dx \right \vert & \leq & 48 N \left( \frac{\delta}{2}\right)^{-1/2} \lambda^{-1/2}\norm{h}{W^{1,1}([a,b])}  \nonumber \\
&& + (N+1) \ \kappa \, \lambda^{-1}\norm{h}{W^{1,1}([a,b])}, \nonumber
\end{eqnarray}
where 
$$\kappa = \max_j \left\lbrace 2 \norm{(g')^{-1}}{L^\infty(V_j)} + \norm{\frac{g' - g''}{(g')^2}}{L^\infty(V_j)} \right\rbrace,$$ 
which is finite, as there are no stationary points in $\cup_j V_j$.
For $\lambda > 1$ we can take
$$C = 48 N \left( \frac{\delta}{2}\right)^{-1/2} + (N+1) \, \kappa. $$
As $\delta$ and $\kappa$ depend continuously on the $C^2$ 
norm of $g$, the proof is concluded.
\end{proof}

\subsection{Piecewise $W^{s, 1}$-potentials}

We say that a curve $\mathcal{C}$ in the plane is contained in 
$C^{m, \alpha}$, with $m\ge 0$ and $ 0 \leq \alpha \leq 1$, 
if there exists a finite collection of bounded open sets $\lbrace U_j \rbrace_{j=1}^N$ 
such that $\mathcal{C} \subset \cup_{j=1}^N U_j$, and functions $f_j \in C^{m, \alpha}(\mathbb{R})$ 
such that
$$ \mathcal{C} \cap U_j \subset \{ (x, f_j(x))\,:\, x\in\mathbb{R} \} \quad \text{or} \quad \mathcal{C} \cap U_j \subset \{(f_j(y),y)\,:\, y\in\mathbb{R}\}. $$
For $\alpha = 0$, we write $f \in C^{m,0}(\mathbb{R})$ 
whenever $f$ and its derivatives up to order~$m$ are continuous and bounded; occasionally we will describe the curve as being simply $C^m$.

Consider two curves $\mathcal{C}_1$ and $\mathcal{C}_2$ for which there is
a finite cover by open sets $\lbrace U_j \rbrace_{j=1}^N$ 
such that for each $j$ either
$$ \mathcal{C}_1 \cap U_j \subset \{ (x, f_{1,j}(x))\,:\, x\in\mathbb{R} \} \quad \text{and} \quad \mathcal{C}_2 \cap U_j \subset \{(x, f_{2,j}(x))\,:\, x\in\mathbb{R}\}$$
or 
$$ \mathcal{C}_1 \cap U_j \subset \{ (f_{1,j}(y), y)\,:\, y\in\mathbb{R} \} \quad \text{and} \quad \mathcal{C}_2 \cap U_j \subset \{(f_{2,j}(y), y)\,:\, y\in\mathbb{R}\}.$$
Then we define the distance between the two curves in $C^{m,\alpha}$ norm as
$$ d(\mathcal{C}_1, \mathcal{C}_2) = \inf \left \lbrace \sup_j \left \lbrace \norm{f_{1,j} - f_{2,j}}{C^{m,\alpha}} \right \rbrace \right \rbrace $$
where the infimum is taken over all possible common covers. 
If a common cover does not exists for the curves,
we write $d(\mathcal{C}_1, \mathcal{C}_2) = \infty$.

A bounded Lipschitz domain whose boundary is a finite union of $C^{m, \alpha}$ curves will be referred to as a \textit{piecewise $C^{m, \alpha}$ domain}. The potentials that we will consider
exhibit discontinuities over $C^{2,\alpha}$ curves, where $1/2 < \alpha \leq 1$. 
More precisely, we are concerned with \textit{piecewise $W^{s, 1}$-potentials} $V$
that can be expressed as 
\begin{equation}
V(x) = \sum_{j=1}^N q_j(x) \goodchi_{\Omega_j} (x), \nonumber
\end{equation}
where $q_j \in W^{s, 1}(\mathbb{R}^2)$ and $\Omega_j$ 
are piecewise $C^{2,\alpha}$ domains with $1/2 < \alpha \leq 1$. 
We will use the following norm for these potentials:
$$ \norm{V}{D^{s,r}} = \inf \left \lbrace \sum_{j=1}^N \norm{q_j}{W^{s,1}} \left( 1 + \norm{\goodchi_{\Omega_j}}{H^{r}} \right) : V(x) = \sum_{j=1}^N q_j(x) \goodchi_{\Omega_j} (x) \right\rbrace $$
where $q_j$ and $\Omega_j$ are as previously described.

The following lemma provides a bound for the $L^2$ Sobolev norm 
for the potentials of our interest.

\begin{lemma} \label{weakDerivatives}
Let $q: \mathbb{R}^2 \rightarrow \mathbb{R}$ and let $\Omega$ be a 
bounded Lipschitz domain in the plane. 
Then there exists a constant $C$ independent of $q$ and $\Omega$ such that

i) For $0<r$ we have 
$$ \norm{q}{H^{r}} \leq C \norm{q}{W^{r + 1,1}}. $$

ii) For $0 < r < 1/2$ we have
$$\norm{q \, \goodchi_{\Omega}}{H^{r}} \leq C \norm{q\, \goodchi_{\Omega}}{D^{2,r}}.$$
\end{lemma}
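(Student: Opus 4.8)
The plan is to prove each part separately, reducing everything to classical facts about Bessel potential spaces and the Sobolev regularity of characteristic functions.

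\medskip

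\noindent\emph{Part i).} The claim is the Sobolev embedding $W^{r+1,1}(\mathbb{R}^2)\hookrightarrow H^r(\mathbb{R}^2)$. For integer-order pieces this follows from the scale of embeddings $W^{k,1}(\mathbb{R}^2)\hookrightarrow W^{k-1,2}(\mathbb{R}^2)$, which in turn comes from the $L^1\to L^2$ mapping properties of the Riesz potential $I_1$ in two dimensions (equivalently, the Gagliardo--Nirenberg--Sobolev inequality $\|f\|_{L^2(\mathbb{R}^2)}\le C\|\nabla f\|_{L^1(\mathbb{R}^2)}$ applied to $f$ and its derivatives). For non-integer $r$ one interpolates, or more directly writes $H^r = I_1(H^{r-1})$ and $W^{r+1,1}$ in terms of Riesz potentials and uses that $I_1:L^1(\mathbb{R}^2)\to L^2(\mathbb{R}^2)$ boundedly, composing with the fractional part via the boundedness of Bessel potentials of order $r-1<0$ on the relevant spaces. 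I would cite a standard reference (e.g. Adams--Fournier or Stein) for the precise embedding $W^{s,1}(\mathbb{R}^n)\hookrightarrow H^{s-n/2}(\mathbb{R}^n)$ specialized to $n=2$, so that only $n/2=1$ derivative is lost. The constant is absolute because all the inequalities involved are translation- and dilation-invariant on $\mathbb{R}^2$; in particular it does not see $q$ or $\Omega$.

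\medskip

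\noindent\emph{Part ii).} Here I would use the algebra/multiplication property of fractional Sobolev spaces for low regularity: for $0<r<1/2$, pointwise multiplication $H^r\times H^r \not\to H^r$ in general, but one does have a bound of the form
\begin{equation}
\norm{q\,\goodchi_\Omega}{H^r} \le C\,\big(\norm{q}{H^r}+\norm{q}{L^\infty}\big)\,\big(1+\norm{\goodchi_\Omega}{H^r}\big), \nonumber
\end{equation}
valid precisely because $\goodchi_\Omega\in L^\infty$ with $\|\goodchi_\Omega\|_{L^\infty}=1$ and $H^r\cap L^\infty$ is a Banach algebra; the restriction $r<1/2$ is exactly what makes $\goodchi_\Omega\in H^r$ possible for a Lipschitz domain. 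I would combine this with Part i) (to replace $\norm{q}{H^r}$ and $\norm{q}{L^\infty}$ by $\norm{q}{W^{2,1}}$, using $r<1/2<2$ and the embedding $W^{2,1}(\mathbb{R}^2)\hookrightarrow L^\infty$... actually $W^{2,1}(\mathbb{R}^2)\hookrightarrow C^0$ holds in two dimensions), and then recognize the resulting expression $\norm{q}{W^{2,1}}(1+\norm{\goodchi_\Omega}{H^r})$ as exactly one admissible term in the infimum defining $\norm{q\,\goodchi_\Omega}{D^{2,r}}$. Summing over the pieces $q_j\goodchi_{\Omega_j}$ and taking the infimum over decompositions gives the stated bound with the $D^{2,r}$ norm on the right. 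The fact that $\goodchi_\Omega\in H^r$ for $r<1/2$ when $\partial\Omega$ is Lipschitz (indeed a finite union of $C^{2,\alpha}$ graphs) is standard and I would just quote it, noting the threshold $1/2$ is sharp.

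\medskip

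The main obstacle is the multiplication estimate in Part ii): one must be careful that no term like $\norm{\goodchi_\Omega}{H^r}\norm{q}{H^r}$ with \emph{both} factors in $H^r$ is actually needed — a naive product rule would give a square. The correct statement uses that one factor ($\goodchi_\Omega$) is bounded, so the bilinear estimate is $\|fg\|_{H^r}\lesssim \|f\|_{L^\infty}\|g\|_{H^r}+\|g\|_{L^\infty}\|f\|_{H^r}$, which holds for $0\le r<1/2$ via a Littlewood--Paley / paraproduct decomposition (the low-regularity range is where this is clean, since there is no derivative to distribute badly). Getting the dependence on $q$ down to $\norm{q}{W^{2,1}}$ rather than $\norm{q}{W^{r+1,1}}$ is harmless since $W^{2,1}\hookrightarrow W^{r+1,1}$ on bounded support, but one should track that the implicit constant stays independent of $\Omega$ — which it does, because the paraproduct constants are universal and the only $\Omega$-dependence is isolated in the explicit factor $1+\norm{\goodchi_\Omega}{H^r}$.
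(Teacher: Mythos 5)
Your proof is correct and follows essentially the same route as the paper: part i) is the Sobolev embedding $W^{m+1,1}(\mathbb{R}^2)\hookrightarrow H^{m}(\mathbb{R}^2)$ (losing $n/2=1$ derivative) applied also to the fractional part, and part ii) is the Kenig--Ponce--Vega fractional Leibniz estimate $\|fg\|_{H^r}\lesssim\|f\|_{L^\infty}\|g\|_{H^r}+\|g\|_{L^\infty}\|f\|_{H^r}$ combined with $W^{2,1}(\mathbb{R}^2)\hookrightarrow L^\infty$, part i), and $\goodchi_\Omega\in H^r$ for $r<1/2$. One small inaccuracy: the Riesz potential $I_1$ is \emph{not} bounded from $L^1(\mathbb{R}^2)$ to $L^2(\mathbb{R}^2)$ (Hardy--Littlewood--Sobolev fails at the $p=1$ endpoint), so the integer-order step should rest on the Gagliardo--Nirenberg--Sobolev inequality $\|f\|_{L^2}\le C\|\nabla f\|_{L^1}$, which you also mention, rather than on $I_1$.
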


\begin{proof}

Let $m$ be the largest integer less than $r$, let $t = r - m$ 
and let $D^t=~(-\Delta)^{t/2}$. For the first case we 
can use Sobolev embedding; see for example \cite[Theorem 4.12, Part 1, Case C]{AF03},
and the fact that $W^{r+1,1} \hookrightarrow W^{m+1,1}$ to obtain
\begin{eqnarray}
\norm{q}{H^{r}} &\leq& \left( \norm{q}{H^m}^2 + \norm{D^t q}{H^m}^2 \right)^{1/2} 
\leq C \left( \norm{q}{H^m}^2 + \norm{D^t q}{W^{m+1,1}}^2 \right)^{1/2} \nonumber \\
&\leq & C \left( \norm{q}{H^m}^2 + \norm{q}{W^{r+1,1}}^2 \right)^{1/2} 
\leq C \norm{q}{W^{r+1,1}}. \nonumber
\end{eqnarray}

For the second case we can use the generalised Leibniz rule \cite[Theorem A.12]{KPV93}, which states that
$$\norm{D^r(q \, \goodchi_{\Omega}) - q \, D^r(\goodchi_{\Omega}) - \goodchi_{\Omega} \, D^r(q)}{L^2} \leq C \norm{q}{L^\infty} \norm{D^r(\goodchi_{\Omega})}{L^2},$$
and so by triangle inequality we obtain
\begin{equation}
\norm{q \, \goodchi_{\Omega}}{H^{r}} \leq C \left( \norm{q}{L^\infty} \norm{\goodchi_{\Omega}}{H^{r}} + \norm{q}{H^{r}} \norm{\goodchi_{\Omega}}{L^\infty} \right).  \label{liebnizType}
\end{equation}
For the first term in the right-hand side, we can use
Sobolev embedding; see for example \cite[Theorem 4.12, Part 1, Case A]{AF03}, to obtain
$$ \norm{q}{L^\infty} \leq C \norm{q}{W^{2, 1}}. $$
On the other hand, we have $\goodchi_{\Omega} \in H^{r}$ 
for $r < 1/2$; see for example~\cite{FR13}. For the second term in the right-hand side of \eqref{liebnizType} 
we can use case $i)$, combined with the embedding $W^{2,1} \hookrightarrow W^{1+r,1}$, concluding the proof.
\end{proof}

\subsection{A topological property of $C^1$ graphs}

Finally, we provide a simple continuity result that will be useful
for characterizing the topological properties of the set of 
points where the reconstruction is not guaranteed 
as well as the continuity properties of the error bound of the reconstruction.

\begin{lemma} \label{implicitComplement}
Let $f \in C^{1}[a,b]$ and $\lbrace x_j \rbrace_{j=1}^N \in (a,b)$ 
be such that $f(x_j) = 0$, $f'(x_j) \neq 0 $ and 
$f(x) \neq 0$ for all $x \in [a,b] \setminus \lbrace x_j \rbrace_{j=1}^N$.
Then, for any $\epsilon
 > 0$, there exists $\delta
$ such that if 
$\norm{f - g}{C^1} < \delta
$ then there exists 
$\lbrace x_j^* \rbrace_{j=1}^N \in (a,b)$ such that
$$\vert x_j - x_j^* \vert < \epsilon
,$$
with $g(x_j^*) = 0$, $g'(x_j^*) \neq 0$ and 
$g(x) \neq 0$ for all  $x \in [a,b] \setminus \lbrace x_j^* \rbrace_{j=1}^N$.
\end{lemma}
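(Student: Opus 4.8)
The plan is to argue by a compactness-plus-implicit-function-theorem scheme, treating the finitely many simple zeros of $f$ separately from the compact region where $f$ is bounded away from zero. First I would fix disjoint closed intervals $I_j = [x_j - \rho, x_j + \rho] \subset (a,b)$ around each zero, with $\rho$ small enough that $f'$ does not vanish on any $I_j$ (possible since $f' \in C^0$ and $f'(x_j)\neq 0$); on each $I_j$ the function $f$ is strictly monotone, so it has exactly one zero there, namely $x_j$. The complement $K = [a,b] \setminus \bigcup_j (x_j-\rho, x_j+\rho)$ is compact and $f$ has no zeros on it, so $\mu := \min_{x \in K} |f(x)| > 0$.

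Next I would choose the perturbation size. Set $m_j := \min_{x \in I_j} |f'(x)| > 0$ and $m := \min_j m_j$. Given $\epsilon > 0$, shrink it if necessary so that $\epsilon < \rho$ and so that additionally $|f(x)| > 0$ on the compact annuli $\{ x : \epsilon \le |x - x_j| \le \rho\}$; call the minimum of $|f|$ over the union of these annuli and $K$ the value $\mu_\epsilon > 0$. Then take $\delta < \min\{ \mu_\epsilon, \, m/2, \, m\epsilon/2 \}$, so that $\|f-g\|_{C^1} < \delta$ forces: (a) $|g| \ge |f| - \delta \ge \mu_\epsilon - \delta > 0$ on $K$ and on all the annuli, hence $g$ has no zero outside $\bigcup_j (x_j - \epsilon, x_j+\epsilon)$; and (b) $|g'| \ge |f'| - \delta \ge m - m/2 = m/2 > 0$ on each $I_j$, so $g$ is strictly monotone on $I_j$, hence has at most one zero there. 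It remains to see that $g$ actually attains a zero inside $(x_j - \epsilon, x_j + \epsilon)$: since $f$ is strictly monotone on $I_j$ with $f(x_j)=0$, we have $|f(x_j \pm \epsilon)| \ge m\epsilon$ (integrating $f'$), with opposite signs at the two endpoints; because $\delta < m\epsilon/2$, the function $g$ keeps those same opposite signs at $x_j \pm \epsilon$, so by the intermediate value theorem $g$ has a zero $x_j^* \in (x_j-\epsilon, x_j+\epsilon)$. By monotonicity on $I_j$ this zero is unique, and $g'(x_j^*) \neq 0$ by (b).

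Finally I would collect the conclusions: the points $x_j^*$ lie in $(a,b)$ (as $\epsilon < \rho$ keeps $I_j \subset (a,b)$), satisfy $|x_j - x_j^*| < \epsilon$, have $g(x_j^*) = 0$ and $g'(x_j^*) \neq 0$, and $g$ vanishes nowhere on $[a,b] \setminus \{x_j^*\}_{j=1}^N$ — outside the intervals by (a), and inside each $I_j$ by uniqueness of the zero. Since the $(x_j - \epsilon, x_j + \epsilon)$ are disjoint for $\epsilon < \rho$, the $x_j^*$ are distinct and we indeed get exactly $N$ of them.

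The only real subtlety, rather than an obstacle, is the bookkeeping of constants: one must order the choices correctly (first $\rho$ from $f'$, then $\mu_\epsilon$ and $m$ from $\epsilon$, then $\delta$ from all of these) so that the $C^1$-smallness of $g-f$ simultaneously controls the sign of $g$ at the interval endpoints, the non-vanishing of $g$ on the complement, and the non-vanishing of $g'$ near each zero. I do not anticipate any genuine difficulty beyond this; the statement is essentially a quantitative version of stability of transverse zeros under $C^1$ perturbation.
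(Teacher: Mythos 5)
Your proposal is correct and follows essentially the same strategy as the paper: isolate each simple zero in an interval where $f'$ is bounded away from zero, bound $|f|$ from below on the complement, use the sign change plus the intermediate value theorem to produce a zero of $g$, and use monotonicity of $g$ for uniqueness. The only cosmetic difference is that you localize $x_j^*$ by checking signs at $x_j\pm\epsilon$, whereas the paper deduces $|x_j-x_j^*|<\epsilon$ from the lower bound on $|g'|$ together with $|g(x_j)|<\epsilon\eta/4$; both yield the same quantitative conclusion.
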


\begin{proof}
Let $\eta = \min_j \lbrace \vert f'(x_j) \vert \rbrace$
and let $r$ be such that $\vert f'(x) \vert > \eta / 2$ for all  $x \in \cup B_r(x_j)$.
Let 
$$a = \inf_{x \in [a,b] \setminus \cup B_r(x_j)} \lbrace \vert f(x) \vert \rbrace.$$ 
Then, for all $g$ such that 
$$\norm{f - g}{C^1} < \tfrac{1}{2}\min \lbrace a, \eta/2 \rbrace,$$
we have 
\begin{equation}
\vert g(x) \vert > a / 2, \quad \forall\ x \in [a,b] \setminus \cup_j B_r(x_j) \nonumber
\end{equation}
and 
\begin{equation}
\vert g'(x) \vert > \eta / 4,\quad \forall\ x \in \cup_j B_r(x_j). \nonumber
\end{equation}

Now, as $\norm{f - g}{C^0} < a /2,$ we know that 
for all $x \in [a,b] \setminus \cup B_r(x_j)$ 
we have $f(x)g(x) > 0$ ($f$ and $g$ have the same sign
outside the balls $B_r(x_j)$), and so, by the intermediate
value theorem, $g$ must vanish
in each of the balls $B_r(x_j)$.
The fact that $g$ only vanishes at a single point $x_j^*$
in each of the balls is a consequence of the fact that $g$
is monotonous inside them. As $\vert g'(x) \vert > \eta / 4$
inside the balls, then, whenever
$$\norm{f - g}{C^0} < \epsilon
 \eta / 4,$$
we have that
$$\vert x_j - x_j^* \vert < \epsilon
.$$
Taking 
$$\delta
 = \min \lbrace a / 2, \eta / 4, \epsilon
 \eta / 4 \rbrace$$
concludes the proof.
\end{proof}

\section{Recovery}\label{rec}

Later we will see that recovery is not guaranteed, even at points that lie far from the discontinuities of the potential. In order to bound the measure of these points, we will require the following key lemmas.

\begin{lemma} \label{tangentLinesArea}
Let 
$\mathcal{C}$ be a $C^1$ curve contained in a bounded planar domain $\Omega$. 
Then, the union of tangent lines to $\mathcal{C}$ with a fixed slope $s$ has
zero Lebesgue measure in $\Omega$.
\end{lemma}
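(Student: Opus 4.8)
The plan is to parametrize the curve and reduce the claim to a statement about a single real-variable function, then cover $\mathcal{C}$ by finitely many arcs and use subadditivity of Lebesgue measure. First I would use the definition of a $C^1$ curve from the preliminaries: cover $\mathcal{C}$ by finitely many open sets $U_j$ on each of which $\mathcal{C}$ is the graph of a $C^1$ function, say $y = f_j(x)$ (the case $x = f_j(y)$ being symmetric). Since a finite union of null sets is null, it suffices to show that for each $j$ the union of tangent lines to the arc $\mathcal{C} \cap U_j$ with the fixed slope $s$ is null. A point of $\mathcal{C} \cap U_j$ at parameter $x = t$ is $(t, f_j(t))$, and the tangent line there has slope $f_j'(t)$; it contributes to the union only when $f_j'(t) = s$. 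So the relevant set is
\[
E_j = \bigcup_{t \,:\, f_j'(t) = s} \{ (x, y) : y = f_j(t) + s(x - t) \} \cap \Omega.
\]

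Next I would observe that each tangent line in $E_j$ is a line of the fixed slope $s$, so $E_j$ is a union of segments all parallel to the direction $(1, s)$. Rotating coordinates so that this common direction becomes horizontal, $E_j$ becomes a union of horizontal segments, and by Fubini it is null as soon as the set of heights it occupies is a Lebesgue-null subset of $\mathbb{R}$. The height of the tangent line at parameter $t$ (in the rotated frame) is a continuous function of the intercept $b(t) = f_j(t) - s\,t$; concretely, the set of occupied heights is contained in $h(\{ b(t) : f_j'(t) = s \})$ for a fixed affine (hence Lipschitz) map $h$. So it suffices to prove that $b(Z)$ has measure zero, where $Z = \{ t : b'(t) = 0 \}$ is the critical set of the $C^1$ function $b(t) = f_j(t) - st$. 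This is exactly the one-dimensional Sard theorem (equivalently, the easy special case $f \in C^1(\mathbb{R})$, $n = m = 1$ of Sard's lemma), which states that the image under a $C^1$ map $\mathbb{R} \to \mathbb{R}$ of its critical set is Lebesgue-null; one can also give a direct proof by covering $Z$ with small intervals on which $|b'|$ is small by uniform continuity of $b'$, so that $b$ compresses length by an arbitrarily small factor there.

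The main obstacle — really the only point requiring care — is making the reduction to Sard precise: one must check that the union of the tangent \emph{lines} (not just their intercepts) is null, which is where the "all segments share the slope $s$" observation and the Fubini/coordinate-rotation step do the work; without the fixed-slope hypothesis the conclusion is false (the union of \emph{all} tangent lines to a curve generically has positive measure). I would also note in passing that the hypothesis $\mathcal{C} \subset \Omega$ with $\Omega$ bounded guarantees we only need finitely many charts $U_j$ and that intersecting with $\Omega$ keeps everything of finite measure, so no issues at infinity arise. Everything else is routine, and the proof should be only a few lines once the Sard input is invoked.
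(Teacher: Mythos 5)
Your proposal is correct, and at bottom it runs on the same engine as the paper's proof; the difference is in packaging. You reduce everything to the one\-dimensional Sard theorem applied to the intercept function $b(t)=f_j(t)-st$ (whose critical set is exactly the set of parameters where the tangent has slope $s$), and then convert ``the set of intercepts is null'' into ``the union of lines is null'' by rotating so that the common direction $(1,s)$ is horizontal and applying Fubini. The paper instead avoids citing Sard: it fixes $\epsilon>0$, looks at the open set $I_\epsilon=\{x:|\Gamma'(x)-s|<\epsilon\}$, decomposes it into countably many disjoint intervals $U_j$, and uses the fundamental theorem of calculus to show that the tangent lines emanating from a single $U_j$ all lie in a rectangle of width $\epsilon\,|U_j|$ and length $\mathrm{diam}(\Omega)$, so the whole union has measure at most $\epsilon\,\mathrm{diam}(\Omega)^2$; letting $\epsilon\to 0$ finishes. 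That argument is precisely the ``direct proof by covering the critical set with intervals on which $|b'|$ is small'' that you sketch as a fallback, fused with your Fubini step into one geometric estimate. What your version buys is modularity and a cleanly quotable input (Sard); what the paper's buys is a completely self-contained, quantitative two-line estimate with no need to justify measurability of $b(Z)$ or set up the rotated coordinates. One small point in your favor: you handle the curve as a finite union of graphs per the paper's own definition, whereas the paper's proof tacitly treats $\mathcal{C}$ as a single global graph; and one small point to watch: in the Fubini step you should note that $b(Z)$ is measurable (e.g.\ as a countable union of continuous images of compacta), which is routine but worth a clause.
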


\begin{proof}
Let $\mathcal{C}=\{(x,\Gamma(x)) \}$, $\epsilon > 0$ and 
$$ I_{\epsilon} = \lbrace x \, : \,\Gamma(x)\in\Omega,\ \vert \Gamma'(x) - s \vert < \epsilon  \rbrace. $$
As $\Gamma \in C^1$,  $I_{\epsilon}$ is open, and as the real line satisfies the 
countable chain condition, $I_{\epsilon}$ must consist of a  countable union of 
disjoint intervals $\lbrace U_j \rbrace$.
For $x_0, x_1 \in U_j, x_0 < x_1$, by the Fundamental Theorem of Calculus, we have
$$ \Gamma(x_1) = \Gamma(x_0) + \int_{x_0}^{x_1} \Gamma'(x)\, dx. $$
As  $\vert \Gamma'(x) - s \vert < \epsilon$ 
in the domain of integration, then
\begin{equation}
(x_1 - x_0)(s - \epsilon) < \Gamma(x_1) - \Gamma(x_0) < (x_1 - x_0)(s + \epsilon), \nonumber
\end{equation}
and
\begin{equation}
\vert \Gamma(x_1) - \Gamma(x_0) - s (x_1 - x_0) \vert < \epsilon \, (x_1 - x_0). \label{maxDistance}
\end{equation}

Now let $l_{x_0,s}$ denote the line-segment with slope $s$
that contains the point $(x_0, \Gamma(x_0))$;
$$
l_{x_0,s}=\{(x,y) \in \Omega : y = \Gamma(x_0)+s(x-x_0)) \}.
$$
We write
$p_0 = (x_0, \Gamma(x_0)) \in l_{x_0,s}$ and $p_1 = (x_1, \Gamma(x_1)) \in l_{x_1,s}$.
As $p_0 \in l_{x_0,s}$, then so is $p_0^* = (x_1, \Gamma(x_0) + s(x_1 - x_0))$.
By \eqref{maxDistance} we know that $d(p_0^*, p_1) < \epsilon (x_1 - x_0)$ 
and so it follows that
 $L_j = \bigcup_{x \in U_j} l_{x,s} $
 is contained in a rectangle of width bounded by
$\epsilon \, |U_j| $ and length bounded by $\mathrm{diam}(\Omega)$.
Thus, 
\begin{eqnarray}
\Big|\bigcup_j L_j \Big| 
\leq \sum_j \left| L_j \right| < \sum_j  \epsilon \, |U_j| \, \mathrm{diam}(\Omega)
\leq \, \epsilon \,  \mathrm{diam}(\Omega)^2. \nonumber
\end{eqnarray}
Letting $\epsilon$ tend to zero, the proof is complete. 
\end{proof}

\begin{lemma} \label{noRecoveryZeroMeasure}
Let $\Omega$ be a bounded domain in the plane, let
$\mathcal{C}$ be a curve contained in $\Omega$ 
which is the graph of a $C^{2,\alpha}$ function with $1/2 < \alpha \leq 1$, 
and let
$\phi_x(z) = (x_1 - z_1)^2 - (x_2 - z_2)^2$.
Then the set of points $x \in \Omega$ such that 
either

i) $\phi_x \vert_\mathcal{C}$  has an infinite number
of stationary points,

ii) $\phi_x \vert_\mathcal{C}$ has at least one stationary point of order greater
than one, \\
has zero Lebesgue measure and is closed.
\end{lemma}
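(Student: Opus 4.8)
The plan is to work with the parametrization $\mathcal{C}=\{(t,\Gamma(t)):t\in J\}$ for some interval $J$, with $\Gamma\in C^{2,\alpha}$, and to study the function $g_x(t)=\phi_x(t,\Gamma(t))=(x_1-t)^2-(x_2-\Gamma(t))^2$. Differentiating, $g_x'(t)=-2(x_1-t)+2(x_2-\Gamma(t))\Gamma'(t)$, so stationary points of $\phi_x|_{\mathcal C}$ are zeros of $g_x'$. The first observation is that $g_x'$ is real-analytic-free but still $C^{1,\alpha}$, and more importantly, for fixed $x$, the condition $g_x'(t)=0$ together with $g_x''(t)=0$ is two equations; I want to show that for \emph{almost every} $x$ this overdetermined system has no solution, and that the solution set of $g_x'=0$ alone is finite. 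A clean way to organize this: a point $x$ is ``bad'' (case i or ii) precisely when there is some $t$ with $g_x'(t)=g_x''(t)=0$, \textbf{or} $g_x'$ has infinitely many zeros; but $g_x'\in C^{1,\alpha}$ vanishing on an infinite set forces (by compactness on each bounded subinterval and the $C^1$ structure) an accumulation point $t_0$ where $g_x'(t_0)=g_x''(t_0)=0$. Hence in all cases a bad $x$ yields a $t$ with $g_x'(t)=g_x''(t)=0$, so it suffices to prove that the set $B$ of such $x$ has measure zero and is closed.

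For the measure-zero part, compute $g_x''(t)=2+2\Gamma'(t)^2-2(x_2-\Gamma(t))\Gamma''(t)$. The system $g_x'(t)=0=g_x''(t)$ can, for fixed $t$, be solved for $x$: from $g_x''(t)=0$ one gets $x_2=\Gamma(t)+\frac{1+\Gamma'(t)^2}{\Gamma''(t)}$ when $\Gamma''(t)\neq0$ (and when $\Gamma''(t)=0$ the equation forces $1+\Gamma'(t)^2=0$, impossible, so those $t$ contribute nothing), and then $g_x'(t)=0$ gives $x_1=t+(x_2-\Gamma(t))\Gamma'(t)$. Thus $B\subset\Phi(J)$ where $\Phi:t\mapsto\bigl(t+\tfrac{(1+\Gamma'^2)\Gamma'}{\Gamma''},\ \Gamma+\tfrac{1+\Gamma'^2}{\Gamma''}\bigr)$ is a map from a subset of $\mathbb{R}$ into $\mathbb{R}^2$. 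The image of a one-dimensional set under a (locally Lipschitz, or merely continuous-on-a-$\sigma$-compact-set) map into the plane has two-dimensional Lebesgue measure zero; I will justify this by covering $J$ by countably many intervals on which $\Gamma''$ is bounded away from $0$ (so $\Phi$ is well-defined and continuous there), noting $\Phi$ is $C^{0,\alpha}$ hence Hölder, and a Hölder image of an interval has Hausdorff dimension at most $1/\alpha<2$. Here is exactly where the hypothesis $\alpha>1/2$ is used, and this is the main obstacle: $\Gamma\in C^{2,\alpha}$ only gives $\Gamma''\in C^{0,\alpha}$, so $\Phi\in C^{0,\alpha}$ and $1/\alpha<2$ precisely when $\alpha>1/2$. (One could alternatively invoke the geometric picture that $B$ is contained in the evolute/caustic of $\mathcal C$ relative to the light-cone quadratic form, but the direct covering argument is self-contained.) The set $\{t:\Gamma''(t)=0\}$ and, if $J$ is unbounded, the behavior at infinity require the countable decomposition but cause no difficulty since a countable union of null sets is null.

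For closedness, I will show the complement is open: suppose $x\notin B$, so for every $t$ either $g_x'(t)\neq0$ or $g_x''(t)\neq0$. Since $g_x'$ then has no repeated zero, its zeros are isolated; intersecting with a large compact interval containing the relevant part of $\mathcal C$ and using that $\mathcal C\cap\Omega$ is bounded, there are finitely many, say $t_1,\dots,t_k$, each with $g_x''(t_i)\neq0$. Both $g_x'$ and $g_x''$ depend continuously (indeed jointly continuously) on $(x,t)$, so a compactness argument — on a neighborhood of $x$ and on the compact $t$-interval, $|g_x'|+|g_x''|$ is bounded below away from the $t_i$, and near each $t_i$ the sign of $g_{x'}''$ is stable and $g_{x'}'$ still has a single simple zero nearby by the argument in Lemma~\ref{implicitComplement} — shows every $x'$ close to $x$ also lies outside $B$. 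This mirrors the intermediate-value-theorem reasoning already used in Lemma~\ref{implicitComplement}, so I will cite that lemma to handle the persistence of simple zeros and the absence of new ones, and conclude that $\Omega\setminus B$ is open, hence $B$ is closed in $\Omega$. Combining the two parts gives the statement.
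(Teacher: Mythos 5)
Your overall architecture matches the paper's: reduce case (i) to case (ii) by an accumulation argument, turn the two conditions $g_x'(t)=g_x''(t)=0$ into an explicit map $t\mapsto x$ when $\Gamma''\neq 0$, kill its image by the H\"older covering argument using $\alpha>1/2$, and get closedness from Lemma~\ref{implicitComplement}. Those parts are sound. But there is a genuine gap caused by a sign error in the second derivative. With $g_x(t)=(x_1-t)^2-(x_2-\Gamma(t))^2$ one gets
\begin{equation}
g_x''(t)=2-2\Gamma'(t)^2+2\,(x_2-\Gamma(t))\,\Gamma''(t), \nonumber
\end{equation}
not $2+2\Gamma'(t)^2-2(x_2-\Gamma(t))\Gamma''(t)$. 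Consequently, when $\Gamma''(t)=0$ the equation $g_x''(t)=0$ forces $\Gamma'(t)^2=1$, which is entirely possible --- it does \emph{not} force the impossible $1+\Gamma'(t)^2=0$. So your claim that the set $\{t:\Gamma''(t)=0\}$ ``contributes nothing'' is false, and this is exactly the hard case: for such $t$ with $\Gamma'(t)=\pm1$, the first equation places $x$ anywhere on the tangent line to $\mathcal{C}$ at $(t,\Gamma(t))$ (which has slope $1/\Gamma'(t)=\Gamma'(t)=\pm 1$), and the set $\{\Gamma''=0\}$ can have positive one-dimensional measure (e.g.\ a straight segment of slope $1$), producing an uncountable union of full line segments of candidate bad points. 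The paper's rhombus example in Section~\ref{rec} shows this situation actually occurs and is not removable.

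Handling it requires an additional ingredient that your proposal omits: one must show that the union of tangent lines to $\mathcal{C}$ of the \emph{fixed} slopes $\pm1$ has planar measure zero (the paper's Lemma~\ref{tangentLinesArea}), and then a limiting argument to cover the region where $\Gamma''$ is small but nonzero (the paper splits into $|\Gamma''|>\delta$ and $|\Gamma''|\le\delta$, traps the latter bad set in a union $T(\delta^*)$ of near-tangent lines with slopes close to $\pm1$, and lets $\delta\to 0$ using $|T(0)\cap\Omega|=0$). Your countable decomposition of $\{\Gamma''\neq0\}$ plus the H\"older image bound is fine for that piece, and your closedness argument via Lemma~\ref{implicitComplement} is essentially the paper's; but as written the proof does not cover the degenerate set $\{\Gamma''=0,\ \Gamma'=\pm1\}$, so the measure-zero claim is not established.
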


\begin{proof}
First we see that if $\phi_x \vert_\mathcal{C}$  has an infinite number
of stationary points, it has at least one stationary point of order greater
than one, and so the first case is contained in the second.
Let $x$ be such that $\phi_x\vert_\mathcal{C}$ has an 
infinite number of stationary points.
Then, by the compactness of 
$ \mathcal{C} $, there exists a sequence of 
stationary points $\lbrace z_{1,i} \rbrace_{i=1}^\infty$ and a point $z_{1,\infty}$ such that
$$\lim_{i \rightarrow \infty} z_{1,i} = z_{1,\infty}$$
with 
$$\frac{\partial \phi_x\vert_\mathcal{C}}{\partial z_1}(z_{1,\infty}) = 0.$$
As $\phi_x\vert_\mathcal{C}$ is a $C^2$ function and 
$\frac{\partial \phi_x\vert_\mathcal{C}}{\partial z_1}$ vanishes 
at all the stationary points then we have
$$\frac{\partial^2 \phi_x\vert_\mathcal{C}}{\partial z_1^2}(z_{1,\infty}) = \lim_{i \rightarrow \infty} \frac{\frac{\partial \phi_x\vert_\mathcal{C}}{\partial z_1}(z_{1,i+1}) - \frac{\partial \phi_x\vert_\mathcal{C}}{\partial z_1}(z_{1,i}) }{z_{1,i+1} - z_{1,i}} = 0,$$
and therefore $z_{1,\infty}$ is a stationary point of order greater than one.

Now we see that the set of $x$ 
such that $\phi_x\vert_\mathcal{C}$
has a stationary point of order greater than one
is null.
That is, the set of $x$ such that
$$ \frac{\partial \phi_x\vert_\mathcal{C}}{\partial z_1}  = \frac{\partial^2 \phi_x\vert_\mathcal{C}}{\partial z_1^2}  = 0 $$
has zero measure. Letting $\Gamma \in C^{2, \alpha}$ be such that 
$\mathcal{C}=\{(z_1,\Gamma(z_1)) \}$, the previous condition 
can be written as
\begin{equation}
z_1 - x_1 - \Gamma'(z_1) (\Gamma(z_1) - x_2) = 1 - \Gamma''(z_1) (\Gamma(z_1) - x_2) - \Gamma'(z_1)^2 = 0 \nonumber
\end{equation}
leading to
\begin{eqnarray}
x_1 = z_1 + \Gamma'(z_1) (x_2 - \Gamma(z_1)), \label{counterTangent} \\
\Gamma''(z_1) \left( x_2 - \Gamma(z_1) \right) =  \Gamma'(z_1)^2 - 1. \label{height} 
\end{eqnarray}

First we consider the case where $|\Gamma''(z_1)|>\delta>0$. As $\Gamma''$ is continuous and the real line satisfies the countable chain condition,  for this to be satisfied $z_1$ must lie in one of at most countably many intervals. Taking one such interval $U$ and rearranging \eqref{counterTangent} and \eqref{height}, the set of $x$ such that $ \phi_x\vert_\mathcal{C}$ has a stationary point
of order greater than one at $z_1 \in U$ is given by
\begin{equation}
x = (x_1, x_2) = G(z_1) = \left( z_1 + \frac{\Gamma'(z_1)^3 - \Gamma'(z_1)}{\Gamma''(z_1)}, \Gamma(z_1)  + \frac{\Gamma'(z_1)^2 - 1}{\Gamma''(z_1)} \right). \nonumber
\end{equation}
We see that the set of $x$ is the image of a $C^{0,\alpha}$ function. To see that such a set has 
zero measure, take $\lbrace U_j \rbrace_{j=1}^{2N}$ 
a covering of $U$ such that $\vert U_j\vert = |U| / N.$ 
Now as
$G(U_j)$ is contained in a ball of radius $\lesssim(|U| / N)^{\alpha},$ 
we obtain
$\vert G(U) \vert \lesssim |U|^{2\alpha} N^{1 - 2\alpha} $. 
As $\alpha > 1/2$, we can let $N$ tend to infinity to conclude that $|G(U)|=0$. This is the only place where we require the H\"older regularity. Now as the countable union of null sets is null, we have concluded the proof in this case.

On the other hand, if $\vert \Gamma''(z_1) \vert \le \delta$ 
and $x \in \Omega$, then by \eqref{height} it follows that 
$\Gamma'(z_1)^2$ must be contained in the interval $[1-\delta^*, 1+\delta^*]$
with $\delta^* = \delta\text{diam}(\Omega)$. Let
$l_{z_1,s}$ be the line that passes through $(z_1,\Gamma(z_1))$ with slope $s$,
and let
$$T(\delta^*) = \bigcup l_{z_1, 1 / \Gamma'(z_{1})}, \ \forall\ z_1 : \Gamma'(z_1)^2 \in [ 1 - \delta^*, 1 + \delta^* ].$$
From equation \eqref{counterTangent} we see that the remaining set 
of $x$ such that $\phi_x \vert_\mathcal{C}$ has a stationary point 
of order greater than one at $(z_1,\Gamma(z_1))$ is contained in 
$T(\delta^*)$.
Using Lemma \ref{tangentLinesArea}, we have
\begin{align*}\lim_{n \rightarrow \infty} \left| T(1/n) \cap \Omega \right| &=\left| \bigcap_{n=1}^\infty \left(T(1/n) \cap \Omega \right) \right| =\left|T(0) \cap \Omega \right|\\
&=\left|\bigcup_{z_1: \Gamma'(z_1)=1} l_{z_1, 1}\cap\Omega\right|+\left|\bigcup_{z_1: \Gamma'(z_1)=-1} l_{z_1, -1}\cap\Omega\right|=0.
\end{align*}
Therefore, for any $\varepsilon > 0 $ we can take $\delta$ small enough so that
$\left| T(\delta^*) \cap \Omega \right| < ~ \varepsilon,$
allowing us to conclude that the set of points $x$ such that $\phi_x\vert_\mathcal{C}$
has a stationary point of order greater than one
is null.

To see that the set is closed, first notice that 
for any $\delta > 0$, there exists $r>0$ such that for any $x' \in B_r(x)$ we have
$$\norm{\phi_x\vert_\mathcal{C} - \phi_{x'}\vert_\mathcal{C}}{C^2} < \delta.$$ Thus, applying Lemma \ref{implicitComplement} to 
$\frac{\partial \phi_x\vert_\mathcal{C}}{\partial z_1}$ 
we see that whenever
$\phi_x\vert_\mathcal{C}$ has a finite number of stationary points of 
degree at most one then 
$\phi_{x'}\vert_\mathcal{C}$ will have the same number of stationary points
and of the same degree for any $x'$ close enough to $x$.
This means that the set of points $x$ such that $\phi_x\vert_\mathcal{C}$ has a finite
number of stationary points of degree at most one is open, and the complement
is closed, concluding the proof.
\end{proof}

Suppose that $0$ is not a Dirichlet eigenvalue for the Hamiltonian $V - \Delta$. Then, for each $f \in H^{1/2}$ there exists a unique solution $u \in H^1$ to \eqref{dirichletProblem}, and the DtN map can be defined by 
\begin{equation}
\langle \Lambda_V[f], v\vert_{\partial \Omega} \rangle = \int_{\partial \Omega} \Lambda_V [f] \, v\vert_{\partial \Omega}  = \int_\Omega V \, u \, v + \nabla u \cdot \nabla v \label{DtNDef}
\end{equation}
for any $v \in H^1(\Omega)$.

Theorem~\ref{reconstructionTheorem0} is contained in the following result in which we also obtain decay rates for potentials with slightly more regularity.

\begin{theorem}\label{reconstructionTheorem}
Let $V$ be a piecewise $W^{s,1}$-potential, with $0 < s-2 <  2r < 1$. Then
for almost every $x \in \Omega$, there exists a constant 
$C_x = C(x, \cup \partial \Omega_j)$ such that
\begin{equation}
\abs{ \frac{\lambda}{\pi} \int_{\partial \Omega} e^{i \lambda \overline{\psi}} \left( \Lambda_V - \Lambda_0 \right) \left[ u_{\lambda,x} \right] - V(x) } \leq C_x \lambda^{1-s/2} \left( \norm{V}{D^{s,r}} + \norm{V}{D^{s,r}}^2 \right) \nonumber
\end{equation}
whenever $\lambda$ is sufficiently large. Moreover, if $s = 2$ then we have
\begin{equation}
\lim_{\lambda \rightarrow \infty } \frac{\lambda}{\pi} \int_{\partial \Omega} e^{i \lambda \overline{\psi}} \left( \Lambda_V - \Lambda_0 \right) \left[ u_{\lambda,x} \right] = V(x),\quad \text{a.e.}\ x\in\Omega. \nonumber
\end{equation}
\end{theorem}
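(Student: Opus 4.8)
\emph{Proof plan.} The strategy is to reduce, via \eqref{DtNDef}, the quantity in the statement to two explicit integrals over $\Omega$ and to analyse each. Recall that $e^{i\lambda\overline\psi}$ is harmonic, since $\partial_z e^{i\lambda\overline\psi}=0$, and that $u_{\lambda,x}=e^{i\lambda\psi}(1+w_{\lambda,x})$ solves $\Delta u=Vu$ in $\Omega$ by the construction of $w_{\lambda,x}$; this construction is valid once $\norm{S^\lambda_V}{}<1$, which by \cite[Lemma 2.3]{AFR13} and Lemma~\ref{weakDerivatives}(ii) holds for $\lambda$ large, since $V\in\dot H^{r}$ with $0<r<1/2$. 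Testing \eqref{DtNDef} against $v=e^{i\lambda\overline\psi}$ and using that $e^{i\lambda\psi}$ is the $V=0$ solution with the same boundary data, one obtains
$$
\frac{\lambda}{\pi}\int_{\partial\Omega}e^{i\lambda\overline\psi}\left(\Lambda_V-\Lambda_0\right)\left[u_{\lambda,x}\right]
=\frac{\lambda}{\pi}\int_{\Omega}e^{i\lambda(\psi+\overline\psi)}\,V\,dz+T^\lambda_{w}[V](x).
$$
For the second term, Lemma~\ref{weakDerivatives}(ii) gives $\norm{V}{\dot H^{r}}\le C\norm{V}{D^{2,r}}\le C\norm{V}{D^{s,r}}$, and then Lemma~\ref{twNorm} yields $\abs{T^\lambda_{w}[V](x)}\le C\lambda^{-r}\norm{V}{D^{s,r}}^2$; the hypothesis $s-2<2r$ gives $\lambda^{-r}\le\lambda^{1-s/2}$, so this is an admissible error. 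It remains to analyse $\frac{\lambda}{\pi}\int_\Omega e^{i\lambda(\psi+\overline\psi)}V$, and here one observes that $\psi+\overline\psi=\phi_x$, the quadratic phase of Lemma~\ref{noRecoveryZeroMeasure}, whose only critical point is the nondegenerate saddle $z=x$, of signature zero.

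Next I would fix the exceptional set. Let $E$ be the union of $\bigcup_j\partial\Omega_j$ together with, for every $C^{2,\alpha}$ graph-piece $\mathcal C$ of every $\partial\Omega_j$, the set of $x$ for which $\phi_x|_{\mathcal C}$ has infinitely many stationary points or a stationary point of order greater than one. By Lemma~\ref{noRecoveryZeroMeasure}, applied to the finitely many graph-pieces, $E$ is closed and has zero Lebesgue measure, so it suffices to prove the bound for $x\in\Omega\setminus E$. Fix such an $x$, choose a ball $B_\delta(x)$ on which $V$ coincides with a single $q_{j_0}\in W^{s,1}(\mathbb R^2)$ (with $q_{j_0}\equiv 0$ if $x$ lies outside all $\overline\Omega_j$), and a cutoff $\eta\in C_c^\infty(B_\delta(x))$ with $\eta\equiv 1$ near $x$; write $V=\eta V+(1-\eta)V$ and treat the two pieces separately.

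For the near piece $\eta V=\eta q_{j_0}$, I would exploit that $\phi_x$ is an \emph{exact} quadratic polynomial: Fubini and the Fresnel integral give, with $a=\eta q_{j_0}$,
$$
\frac{\lambda}{\pi}\int_{\mathbb R^2}e^{i\lambda\phi_x}a\,dz
=c\int\widehat a(\xi)\,e^{ix\cdot\xi}\,e^{-i(\xi_1^2-\xi_2^2)/(4\lambda)}\,d\xi,
$$
where $c$ is the normalising constant for which the integrand converges to $\widehat a(\xi)e^{ix\cdot\xi}$, so that the limit equals $a(x)=q_{j_0}(x)=V(x)$. Estimating $\abs{e^{-i(\xi_1^2-\xi_2^2)/(4\lambda)}-1}\le\min\{2,\abs{\xi}^2/4\lambda\}$, using the decay $\abs{\widehat a(\xi)}\le C\norm{q_{j_0}}{W^{s,1}}(1+\abs{\xi})^{-s}$, and splitting the integral at $\abs{\xi}\sim\sqrt\lambda$ produces a remainder of order $\lambda^{1-s/2}\norm{q_{j_0}}{W^{s,1}}\le\lambda^{1-s/2}\norm{V}{D^{s,r}}$, which is the linear term in the statement. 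When $s=2$ this only gives boundedness; there one instead approximates $\eta q_{j_0}$ in $W^{2,1}$ by smooth functions, applies the classical stationary phase expansion to the approximants, and uses the uniform-in-$\lambda$ bound $\lambda\abs{\int e^{i\lambda\phi_x}\eta q_{j_0}}\le C\norm{q_{j_0}}{W^{2,1}}$ (again because $\phi_x$ is quadratic with a nondegenerate critical point) to pass to the limit, obtaining convergence to $V(x)$.

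The main work, and the principal obstacle, is the far piece $\frac{\lambda}{\pi}\int e^{i\lambda\phi_x}(1-\eta)V$, on whose support $\nabla\phi_x$ is bounded away from zero. I would integrate by parts twice using $e^{i\lambda\phi_x}=(i\lambda)^{-1}\abs{\nabla\phi_x}^{-2}\nabla\phi_x\cdot\nabla e^{i\lambda\phi_x}$, carrying along the distributional derivatives of $(1-\eta)V$: each differentiation produces an absolutely continuous part (involving derivatives of the $q_j$, which lie in $W^{1,1}$, so the procedure is legitimate) plus terms supported on the discontinuity curves $\partial\Omega_j$. After the first integration by parts the curve terms equal $\lambda^{-1}$ times one-dimensional oscillatory integrals along $\partial\Omega_j$ with phase $\phi_x|_{\partial\Omega_j}$ and amplitude a smooth nonvanishing weight times the trace of $q_j$, which lies in $W^{1,1}$ on the curve by the trace theorem; since $x\notin E$ these phases have only finitely many stationary points of order at most one, so Lemma~\ref{statPhase1d} bounds each such integral by $C\lambda^{-1/2}$, with $C$ depending continuously on the $C^2$ norm of $\phi_x|_{\partial\Omega_j}$, hence on $x$ and $\bigcup_j\partial\Omega_j$. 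The absolutely continuous integral remaining after the second integration by parts is $O(\lambda^{-2})$, as are the second-generation curve terms (a crude $L^1$ bound on the curve suffices). Multiplying by $\lambda/\pi$, the far piece is $O(\lambda^{-1/2}\norm{V}{D^{s,r}})$, which is dominated by $\lambda^{1-s/2}$ for $s<3$. Collecting the three contributions gives the inequality for all $x\in\Omega\setminus E$, and since $E$ is null we obtain the a.e.\ statement; the case $s=2$ then follows from the three limits established above. The hard part is precisely this far piece: organising the integration by parts across the jump curves and, crucially, invoking the null set $E$ of Lemma~\ref{noRecoveryZeroMeasure} so that the one-dimensional oscillatory integrals along the discontinuities genuinely decay.
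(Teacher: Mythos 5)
Your proposal follows essentially the same route as the paper: the Alessandrini-type reduction via \eqref{DtNDef}, the $T^\lambda_w$ error controlled by Lemmas~\ref{twNorm} and~\ref{weakDerivatives}, the exceptional null set from Lemma~\ref{noRecoveryZeroMeasure}, a near/far splitting around $x$, double integration by parts on the far piece with Lemma~\ref{statPhase1d} applied to the resulting one-dimensional oscillatory integrals along the discontinuity curves, and the explicit Fourier computation giving the rate $\lambda^{1-s/2}$ when $s>2$. The one genuine divergence is the borderline case $s=2$: the paper identifies $\frac{\lambda}{\pi}\int e^{i\lambda\phi_x}q_x$ with the solution of the nonelliptic time-dependent Schr\"odinger equation at time $1/\lambda$ with $H^1_0$ data and invokes the almost-everywhere convergence theorem of \cite{RVV}, whereas you propose a density argument in $W^{2,1}$ resting on the uniform bound $\lambda\,\lvert\int e^{i\lambda\phi_x}\eta q\rvert\le C\norm{q}{W^{2,1}}$. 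That bound is true, but not for the reason you give (``quadratic phase with a nondegenerate critical point''): the naive Fourier-side estimate fails because $(1+\abs{\xi})^{-2}\notin L^1(\mathbb{R}^2)$, so one must exploit the product structure $e^{i\lambda\phi_x}=e^{i\lambda(z_1-x_1)^2}e^{-i\lambda(z_2-x_2)^2}$ and iterate the one-dimensional van der Corput estimate in each variable, which consumes exactly the mixed derivative $\partial_1\partial_2(\eta q)\in L^1$. With that point supplied, your version is sound and in fact yields convergence at every $x$ outside the exceptional set rather than merely almost everywhere, at the cost of not reusing the maximal-function machinery of \cite{RVV}.
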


\begin{proof}
As the DtN is a self-adjoint operator
and $e^{i \lambda \overline{\psi_x}}$ satisfies the Laplace equation,
then we can use the DtN map definition \eqref{DtNDef} to see that
\begin{eqnarray}
\frac{\lambda}{\pi} \int_{\partial \Omega} e^{i \lambda \overline{\psi_x}} \left( \Lambda_V - \Lambda_0 \right) \left[ u \right] 
= \frac{\lambda}{\pi} \int_\Omega e^{i \lambda \phi_x} \, V \, (1 + w), \nonumber
\end{eqnarray}
where $\phi_x = \psi_x + \overline{\psi_x}$. 
Recalling that $0 \leq s-2 <  2r < 1$, by Lemma \ref{twNorm} we have
$$ \sup_{x \in \Omega} \left\vert \frac{\lambda}{\pi} \int_\Omega e^{i \lambda \phi_x} \, V \, w \right\vert \leq C \, \lambda^{-r} \ \norm{V}{H^r}^2, \quad \text{as } \lambda \rightarrow \infty, $$
and by part $ii)$ of Lemma~\ref{weakDerivatives} this yields
\begin{equation}
\sup_{x \in \Omega} \left\vert \frac{\lambda}{\pi} \int_\Omega e^{i \lambda \phi_x} \, V \, w \right\vert \leq C \, \lambda^{1-s/2} \ \norm{V}{D^{s,r}}^2, \quad \text{as } \lambda \rightarrow \infty. \label{smallW}
\end{equation}

Now, by Lemma \ref{noRecoveryZeroMeasure} we know that for
almost every $x$ in $\Omega$, $\phi_x\vert_{\cup \partial \Omega_j}$
has only a finite number of stationary points of order at most one. We now 
prove that the reconstruction formula recovers the potential correctly at these points for piecewise $W^{s,1}$-potentials, $s>2$ (almost all of them for $W^{2,1}$-potentials).
First we split the integral 
\begin{equation}
\frac{\lambda}{\pi}\int_\Omega e^{i \lambda \phi_x} \, V \, = \frac{\lambda}{\pi}\sum_{j=1}^N \int_{\Omega_j} e^{i \lambda \phi_x} \, q_j, \label{split}
\end{equation}
where we write $\phi$ for $\phi_x$ from now on.
We will prove 
that the value of each of these integrals tends to zero sufficiently fast whenever 
the integration domain does not contain $x$, the point at which we are reconstructing. Then we show that the value of the integral that contains $x$ 
converges to $V(x)$. 
Without loss of generality, we can suppose that $x$ belongs to the interior of~$\Omega_1$. For $j > 1$, we use Green's first identity, 
with $u = \frac{e^{i \lambda \phi}}{i \lambda}$ and 
$\nabla v =~\frac{q_j \, \nabla \phi}{\vert \vert \nabla \phi \vert \vert^2}$, 
to obtain
\begin{equation}
 \int_{\Omega_j} e^{i \lambda \phi} \, q_j = \frac{1}{i \lambda} \int_{\partial\Omega_j} e^{i \lambda \phi} \frac{q_j \, \nabla \phi}{\enorm{\nabla \phi}^2} \cdot \mathbf{n} - \frac{1}{i \lambda} \int_{\Omega_j} e^{i \lambda \phi} \, \nabla \cdot \left( \frac{q_j \, \nabla \phi}{\enorm{\nabla \phi}^2} \right). \nonumber
\end{equation}
Using Green's first identity again on the second term with 
$u = \frac{e^{i \lambda \phi}}{i \lambda}$ and 
${ \nabla v = \nabla \cdot \left( \frac{q_j \, \nabla \phi}{\vert \vert \nabla \phi \vert \vert^2} \right) \frac{\nabla \phi}{\vert \vert \nabla \phi \vert \vert^2} }$ 
leads to
\begin{eqnarray}
\frac{\lambda}{\pi}\int_{\Omega_j} e^{i \lambda \phi} \, q_j  &=& \frac{1}{i \pi} \int_{\partial\Omega_j} e^{i \lambda \phi} \, \frac{q_j \, \nabla \phi}{\enorm{\nabla \phi}^2} \cdot \mathbf{n} \nonumber \\
&& + \frac{1}{\pi\lambda} \int_{\partial\Omega_j} e^{i \lambda \phi} \, \diver \left(\frac{q_j \, \nabla \phi}{\enorm{\nabla \phi}^2} \right) \frac{\nabla \phi}{\enorm{\nabla \phi}^2} \cdot \mathbf{n} \label{byParts} \\
&& - \frac{1}{\pi\lambda} \int_{\Omega_j} e^{i \lambda \phi} \, \diver \left( \diver \left(\frac{q_j \, \nabla \phi}{\enorm{\nabla \phi}^2} \right) \frac{\nabla \phi}{\enorm{\nabla \phi}^2} \right). \nonumber
\end{eqnarray}
As the number of stationary points
on $\partial \Omega_j$ is finite and are of order at most one, and
by trace theorem we know that $q_j\vert_{\partial \Omega_j} \in W^{1,1}(\partial \Omega_j)$
(see for example \cite[Section 5.5, Theorem 1]{E10}), 
then we can use
Lemma \ref{statPhase1d} on each of the $C^2$ components of 
$\partial \Omega_j$, together with H\"older's inequality, to see that there exists 
$C_x' = C(x, \partial \Omega_j)$ such that
\begin{equation}
\left\vert \int_{\partial \Omega_j} e^{i \lambda \phi} \frac{q_j \nabla \phi}{\enorm{\nabla \phi}^2} \cdot \mathbf{n} \right\vert \leq C_x' \lambda^{-1/2}  \norm{\frac{\nabla \phi \cdot \mathbf{n}}{\enorm{\nabla \phi}^2}}{W^{1,\infty} (\partial \Omega_j)} \norm{q_j}{W^{1,1}(\partial \Omega_j)}, \label{auxConstant}
\end{equation}
as $ \lambda \rightarrow \infty$. As $x$ belongs to the interior  of 
$\Omega_1$, we have that 
$\enorm{\nabla \phi}^{-2}$ is bounded, and
using the trace theorem this yields to
\begin{equation}
\Big|\int_{\partial\Omega_j} e^{i \lambda \phi} \, \frac{q_j \, \nabla \phi}{\enorm{\nabla \phi}^2} \cdot \mathbf{n}\Big| \leq C_x \lambda^{-1/2} \norm{V}{D^{s,r}}, \quad \text{as } \lambda \rightarrow \infty. \label{part1}
\end{equation}
For the second term on the right-hand side of \eqref{byParts} 
we can use H\"older's inequality to obtain
\begin{eqnarray}
&& \left\vert \int_{\partial\Omega_j} e^{i \lambda \phi} \, \diver \left(\frac{q_j \, \nabla \phi} {\enorm{\nabla \phi}^2} \right) \frac{\nabla \phi}{\enorm{\nabla \phi}^2} \cdot \mathbf{n} \right\vert \nonumber \\
&& \leq \norm{\diver \frac{q_j \, \nabla \phi}{\enorm{\nabla \phi}^2} }{L^1(\partial\Omega_j)} 
\norm{\frac{\nabla \phi  \cdot \mathbf{n}}{\enorm{\nabla \phi}^2}}{L^\infty(\partial\Omega_j)} \nonumber \\
&& \leq \left( \norm{\phi}{\dot{W}^{1,\infty}(\partial\Omega_j)} \norm{\enorm{\nabla \phi}^{-2}}{L^\infty(\partial\Omega_j)} + \norm{\diver \frac{\nabla \phi}{\enorm{\nabla \phi}^2}}{L^\infty(\partial\Omega_j)} \right) \nonumber \\
&& \ \ \ \times \norm{\frac{\nabla \phi  \cdot \mathbf{n}}{\enorm{\nabla \phi}^2}}{L^\infty(\partial\Omega_j)} \norm{q_j}{W^{1,1}(\partial \Omega_j)}, \nonumber
\end{eqnarray} 
and by the trace theorem we get
\begin{equation}
\left\vert \int_{\partial\Omega_j} e^{i \lambda \phi} \, \diver \left(\frac{q_j \, \nabla \phi}{\enorm{\nabla \phi}^2} \right) \frac{\nabla \phi}{\enorm{\nabla \phi}^2} \cdot \mathbf{n} \right\vert
\leq C_x \norm{V}{D^{s,r}}. \label{part2}
\end{equation}
Similarly, for the last term  on the right-hand side of \eqref{byParts} we have
\begin{eqnarray}
&& \left\vert \int_{\Omega_j} e^{i \lambda \phi} \, \diver \left( \diver \left(\frac{q_j \, \nabla \phi}{\enorm{\nabla \phi}^2} \right) \frac{\nabla \phi}{\enorm{\nabla \phi}^2} \right) \right\vert \nonumber \\
&& \leq \norm{\diver \frac{q_j \nabla \phi}{\enorm{\nabla \phi}^2}}{\dot{W}^{1,1}(\Omega_j)} \norm{\phi}{\dot{W}^{1,\infty}(\Omega_j)}  \norm{\enorm{\nabla \phi}^{-2}}{L^\infty(\Omega_j)} \nonumber \\
&& \ \ \ +  \norm{\diver \frac{q_j \nabla \phi}{\enorm{\nabla \phi}^2}}{L^1(\Omega_j)} \norm{\diver \frac{\nabla \phi}{\enorm{\nabla \phi}^2}}{L^\infty (\Omega_j)} \nonumber 
\end{eqnarray}
\begin{eqnarray}
&& \leq \left( \norm{\phi}{W^{2,\infty}(\Omega_j)} \norm{\enorm{\nabla \phi}^{-2}}{W^{1,\infty}(\Omega_j)} + \norm{\diver \frac{\nabla \phi}{\enorm{\nabla \phi}^2}}{\dot{W}^{1,\infty}(\Omega_j)} \right) \nonumber \\
&& \ \ \ \times \norm{\phi}{\dot{W}^{1,\infty}(\Omega_j)}  \norm{\enorm{\nabla \phi}^{-2}}{L^\infty(\Omega_j)} \norm{q_j}{W^{2,1}(\Omega_j)} \nonumber \\
&& \ \ \ +  \left( \norm{\phi}{\dot{W}^{1,\infty}(\Omega_j)} \norm{\enorm{\nabla \phi}^{-2}}{L^\infty(\Omega_j)} + \norm{\diver \frac{\nabla \phi}{\enorm{\nabla \phi}^2}}{L^\infty(\Omega_j)} \right) \nonumber \\
&& \ \ \ \times  \norm{\diver \frac{\nabla \phi}{\enorm{\nabla \phi}^2}}{L^\infty (\Omega_j)} \norm{q_j}{W^{1,1}(\Omega_j)} \nonumber \\
&& \leq \left( \norm{\phi}{W^{2,\infty}(\Omega_j)} \norm{\enorm{\nabla \phi}^{-2}}{W^{1,\infty}(\Omega_j)} + \norm{\diver \frac{\nabla \phi}{\enorm{\nabla \phi}^2}}{W^{1,\infty}(\Omega_j)} \right)^2 \nonumber \\
&& \ \ \ \times \norm{q_j}{W^{2,1}(\Omega_j)} \nonumber 
\end{eqnarray} 
yielding
\begin{eqnarray} 
\left\vert \int_{\Omega_j} e^{i \lambda \phi} \, \diver \left( \diver \left(\frac{q_j \, \nabla \phi}{\enorm{\nabla \phi}^2} \right) \frac{\nabla \phi}{\enorm{\nabla \phi}^2} \right) \right\vert \leq C_x \norm{V}{D^{s,r}}. \label{part3}
\end{eqnarray} 
Plugging \eqref{part1}, \eqref{part2} and \eqref{part3} into \eqref{byParts} we obtain
\begin{equation}
\abs{ \frac{\lambda}{\pi}\int_{\Omega_j} e^{i \lambda \phi} \, q_j} \leq C_x \lambda^{-1/2} \norm{V}{D^{s,r}} \quad \text{as } \lambda \rightarrow \infty. \label{nonStationaryOmega}
\end{equation}

We now consider $\int_{\Omega_1} e^{i \lambda \phi}q_1$ by decomposing $q_1$ into
\begin{equation}
q_x = q_1 \chi, \quad q_{\mathrm{rem}} = q_1 (1 - \chi), \nonumber
\end{equation}
where $\chi(z)$ is a bump function such that 
\begin{equation}
\chi (z) = \left\lbrace \begin{array}{rl}
1 & \text{if } \ \norm{z - x}{}    \leq r_1, \\
0 & \text{if } \ \norm{z - x}{}     \geq r_2,
\end{array} 
\right.\nonumber
\end{equation}
with $0 < r_1 < r_2 < d(x, \partial \Omega_1)$. As $q_{\mathrm{rem}}(y) = 0$ for $y$ close enough to $x$, we 
can use the same arguments that lead to \eqref{nonStationaryOmega} 
to obtain
\begin{equation}
\abs{ \frac{\lambda}{\pi} \int_{\Omega_1} e^{i \lambda \phi} \, q_{\mathrm{rem}}} \leq C_x \lambda^{-1/2} \norm{q_{\mathrm{rem}}}{W^{2,1}}, \quad \text{as } \lambda \rightarrow \infty. \label{stationaryOmegaRest}
\end{equation}
On the other hand, as $q_1 \in W^{2,1}(\Omega_1)$, we can use
Sobolev embedding (see for example \cite[Theorem 4.12, Part 1, Case C]{AF03})
to see that $q_x \in H^1_0(\Omega_1)$. Now it was noted in \cite{AFR13} that 
$\frac{\lambda}{\pi} \int_{\Omega_1} e^{i \lambda \phi} \, q_x$ can be interpreted as the solution to a nonelliptic {\it time dependent} Schr\"odinger equation, at time $1/\lambda$. Thus, using the almost everywhere convergence result of \cite[Theorem 1]{RVV} we obtain
\begin{equation}
\lim_{\lambda \rightarrow \infty} \frac{\lambda}{\pi} \int_{\Omega_1} e^{i \lambda \phi} \, q_x = q_x(x)=q_1(x)=V(x) \quad \text{a.e.}\ x\in \mathbb{R}^2.\label{stationaryOmega}
\end{equation}
If $q_1 \in W^{s,1}$ with $s > 2$, then we can recover at all the remaining points and we get a decay rate. Indeed, 
\begin{align*}
\left|\frac{\lambda}{\pi} \int_{\Omega_1} e^{i \lambda \phi} \, q_x - q_x(x)\right|
&= \left|\frac{\lambda}{\pi} \left( q_x * e^{i \lambda (z_1^2 - z_2^2)} \right) (x) - q_x(x) \right| \\
&= \left|\frac{1}{4\pi^2} \int e^{ix\cdot\xi} \, \widehat{q}_x(\xi) \, \left( e^{-i \frac{1}{\lambda} (\xi_1^2 - \xi_2^2)} - 1 \right) d\xi\right| \\
&\leq \norm{q_x}{\dot{H}^{s-1}} \left( \int \frac{\left| e^{-i \frac{1}{\lambda} (\xi_1^2 - \xi_2^2)} - 1 \right|^2}{|\xi|^{2s-2}} d\xi \right)^{1/2} \\
&= \norm{q_x}{\dot{H}^{s-1}} \lambda^{1-s/2} \left( \int \frac{2 - 2 \cos (\xi_1^2 - \xi_2^2)}{|\xi|^{2s-2}} d\xi \right)^{1/2} \\
&= \norm{q_x}{\dot{H}^{s-1}} \lambda^{1-s/2} \left( \int \frac{\sin^2 (\frac{1}{2}(\xi_1^2 - \xi_2^2))}{|\xi|^{2s-2}} d\xi \right)^{1/2}.
\end{align*}
Note that  for $2<s<3$ we have
$$\lim_{\abs{\xi} \rightarrow 0} \frac{\sin (\xi_1^2 - \xi_2^2)}{\abs{\xi}^{s-1}} \leq \lim_{\abs{\xi} \rightarrow 0} \frac{\sin (\abs{\xi}^2)}{\abs{\xi}^2} = 1$$
thus the integral is finite, and we can use 
part $i)$ of Lemma \ref{weakDerivatives} to obtain
\begin{equation}
\abs{ \frac{\lambda}{\pi} \int_{\Omega_1} e^{i \lambda \phi} \, q_x - q_x(x)} \leq C \lambda^{1-s/2} \norm{q_x}{W^{s,1}}, \quad \text{as } \lambda \rightarrow \infty. \label{stationaryOmegaRate}
\end{equation}

Plugging \eqref{nonStationaryOmega}, \eqref{stationaryOmegaRest}, \eqref{stationaryOmega}, and \eqref{stationaryOmegaRate} into \eqref{split}, 
together with \eqref{smallW} concludes the proof.
\end{proof}

\begin{remark}
As noted in \cite{AFR13}, $\frac{\lambda}{\pi} \int_{\Omega} e^{i \lambda \phi} \, V$ can be interpreted 
as the solution to a nonelliptic {\it time dependent} Schr\"odinger equation 
at time $1/\lambda$. Therefore equations \eqref{nonStationaryOmega}, \eqref{stationaryOmegaRest}, \eqref{stationaryOmega}, and \eqref{stationaryOmegaRate}
imply almost everywhere convergence to the initial data $V$, 
whenever $V$ is piecewise-$W^{s, 1}$ with $2 \leq s < 3$.
\end{remark}

A consequence of Theorem \ref{reconstructionTheorem} is that potentials of this type can also be 
recovered from the scattering data at a fixed energy.

\begin{corollary} 
Let $V$ be a piecewise $W^{2,1}$-potential. Then $V$ can be recovered 
almost everywhere from the scattering amplitude at a fixed energy~$k$. 
\end{corollary}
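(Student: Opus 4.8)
The plan is to combine Theorem~\ref{reconstructionTheorem} with the known formula relating the scattering amplitude to the Dirichlet\!-to\!-Neumann map at a fixed energy, as established in \cite{AFR14}. First I would recall that a piecewise $W^{2,1}$-potential is bounded and compactly supported, so it is in particular an admissible potential for the scattering problem \eqref{scatteringEq}; under the assumption that $k^2$ is not a Dirichlet eigenvalue for $-\Delta + V$ on $\Omega$, the scattering solutions and the scattering amplitude $A_V$ are well defined. The key point is that $A_V$ determines $\Lambda_{V-k^2}$, the Dirichlet\!-to\!-Neumann map for the operator $-\Delta + V - k^2$ on $\Omega$; this is precisely the content of the reconstruction formula of \cite{AFR14}, which expresses $\Lambda_{V-k^2}$ (equivalently, the boundary values of the exponentially growing solutions) in terms of the far-field data $A_V$.

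Next I would apply Theorem~\ref{reconstructionTheorem} not to $V$ directly but to the shifted potential $\widetilde{V} := V - k^2$. Since $k^2$ is a constant, $\widetilde{V}$ is again a piecewise $W^{2,1}$-potential with jump discontinuities on the same smooth curves $\cup_j \partial\Omega_j$: adding a constant changes the smooth pieces $q_j$ to $q_j - k^2 \in W^{2,1}(\Omega_j)$ and leaves the domains $\Omega_j$ untouched. Therefore Theorem~\ref{reconstructionTheorem} gives
$$ \lim_{\lambda \to \infty} \frac{\lambda}{\pi} \int_{\partial\Omega} e^{i\lambda\overline{\psi}} \left( \Lambda_{\widetilde{V}} - \Lambda_0 \right)\left[ u_{\lambda,x} \right] = \widetilde{V}(x) = V(x) - k^2, \quad \text{a.e. } x \in \Omega, $$
and adding back the constant $k^2$ recovers $V(x)$ almost everywhere. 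The Bukhgeim solutions $u_{\lambda,x}$ needed here are built from $\widetilde{V}$ exactly as in the preliminaries, and their boundary values are determined from $\Lambda_{\widetilde{V}}$ as explained in \cite{AFR13}.

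The only real obstacle is bookkeeping: one must check that the formula in \cite{AFR14} indeed produces the DtN map of $-\Delta + V - k^2$ (and not, say, of $-\Delta + V$ at zero energy) from the fixed-energy scattering amplitude, and that the non-eigenvalue hypothesis matches across the two settings. Once that identification is in place, the corollary is immediate from Theorem~\ref{reconstructionTheorem} applied to $V - k^2$. I would state this explicitly, cite \cite{AFR14} for the passage $A_V \rightsquigarrow \Lambda_{V - k^2}$, and note that since $V - k^2$ inherits the piecewise-$W^{2,1}$ structure of $V$ with the same discontinuity set, all hypotheses of Theorem~\ref{reconstructionTheorem} are met, giving the almost-everywhere recovery of $V - k^2$ and hence of $V$.
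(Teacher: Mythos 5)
Your proposal is correct and follows essentially the same route as the paper: pass from $A_V$ to $\Lambda_{V-k^2}$ via the formula of \cite{AFR14} and then apply Theorem~\ref{reconstructionTheorem} to the shifted potential. The only cosmetic difference is that the paper works with $V - k^2\goodchi_Q$ on an auxiliary square $Q$ containing the support (so the shifted potential remains compactly supported and picks up $\partial Q$ as an additional, harmless, piecewise smooth discontinuity), whereas you shift by the constant $k^2$ globally.
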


\begin{proof}
Let $Q$ be a square such that $\cup_{j=1}^N \Omega_j \subset Q$. 
In \cite{AFR14} expressions are given for computing $\Lambda_{V-k^2}$
defined on $\partial Q$ from the scattering amplitude at energy~$k^2$. 
Therefore, the recovery from the scattering amplitude follows directly from the fact
that if $V$ is piecewise $W^{2,1}$-potential, then so is 
$V -~k^2 \goodchi_Q$, which allows us to recover the potential using 
Theorem \ref{reconstructionTheorem}.
\end{proof}

For the stability estimates of the sequel we will require some continuity properties of the constant 
in Theorem \ref{reconstructionTheorem} which we record as a lemma.

\begin{lemma} \label{continuousConstant}
Let $\mathcal{N}$ be the set of $x$ such that $\phi_{x}\vert_{\cup \partial \Omega_j}$ 
has a stationary point of degree greater that one.
Then the constant $C_x = C(x, \cup \partial \Omega_j)$ 
in Theorem~\ref{reconstructionTheorem}
has the following 
continuity properties in $\Omega \setminus \mathcal{N}$:

i) It is continuous with respect to $x$.

ii) It is continuous with respect to 
$\cup \partial \Omega_j$ in the $C^2$ norm.
\end{lemma}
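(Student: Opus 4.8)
The plan is to trace through the proof of Theorem~\ref{reconstructionTheorem} and identify every place where the constant $C_x$ was assembled, then argue that each ingredient depends continuously on $x$ and on $\cup\partial\Omega_j$ (in the $C^2$ norm) as long as $x$ stays away from the bad set $\mathcal{N}$. Recall that $C_x$ was built out of three types of quantities: (a) the constant coming from the one-dimensional stationary phase Lemma~\ref{statPhase1d} applied on each $C^2$ component of each $\partial\Omega_j$; (b) various $L^\infty$ and $W^{1,\infty}$ (and $W^{2,\infty}$) norms on $\Omega_j$ and $\partial\Omega_j$ of the vector field $\nabla\phi/\enorm{\nabla\phi}^2$ and of its divergence; and (c) the reciprocal lower bound $\enorm{\nabla\phi}^{-2}$, which is controlled because $x$ lies in the interior of one of the $\Omega_j$ and is uniformly bounded away from $\nabla\phi=0$, i.e.\ from the point $z=x$ where $\nabla\phi_x$ vanishes. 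I would organize the argument around showing continuity of each of (a), (b), (c) separately.

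First I would deal with (c) and (b). Since $\phi_x(z)=(x_1-z_1)^2-(x_2-z_2)^2$ is jointly smooth in $(x,z)$ and $\nabla\phi_x(z)$ vanishes only at $z=x$, the quantity $\min_{z\in\partial\Omega_j}\enorm{\nabla\phi_x(z)}$ is positive and, by compactness of $\partial\Omega_j$, depends continuously on $x$ (for $x$ in a fixed $\Omega_j$, staying away from $\partial\Omega_j$) and on the curves $\partial\Omega_j$ in the $C^0$—hence $C^2$—norm. This gives continuity of $\enorm{\nabla\phi}^{-2}$ in $L^\infty(\partial\Omega_j)$ and on $\Omega_j\setminus\{x\}$. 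The same reasoning, together with the fact that computing $\nabla\phi$, $\nabla^2\phi$, $\diver(\nabla\phi/\enorm{\nabla\phi}^2)$ etc.\ involves only differentiating the explicit quadratic $\phi$ and the coordinate functions parametrizing $\partial\Omega_j$ up to second order, shows that all the norms in (b) are continuous functions of $x$ and of the parametrizing functions $f_{i,j}$ in the $C^2$ topology. Here one uses that a $C^2$-small perturbation of the curves induces a $C^0$-small (indeed $C^1$-small on the boundary, via the trace) perturbation of each of these finitely many quantities.

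The main obstacle is (a): the constant in Lemma~\ref{statPhase1d} is only asserted to depend \emph{continuously} on the $C^2$ norm of the phase \emph{provided the number of stationary points and the minimal size $\delta=\min_j|g''(s_j)|$ are controlled}; a bifurcation in which a stationary point appears, disappears, or degenerates would destroy continuity. This is exactly where the hypothesis $x\notin\mathcal{N}$ enters, via Lemma~\ref{implicitComplement}. I would argue as follows: restrict $\phi_x|_{\cup\partial\Omega_j}$ to each $C^2$ graph component and write it in the coordinate of that component as $g_x(t)$, a function of $t$ depending continuously in $C^2$ on $x$ and on the curve. For $x\notin\mathcal{N}$ all stationary points of $g_x$ have order exactly one, and there are finitely many. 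Apply Lemma~\ref{implicitComplement} to $g_x'$: for any $\epsilon>0$ there is $\delta>0$ such that whenever a competing function $g_{x'}$ (arising from a nearby $x'$ or a $C^2$-nearby curve) satisfies $\|g_x'-g_{x'}'\|_{C^1}<\delta$, it has the \emph{same} number of nondegenerate stationary points, each within $\epsilon$ of the original, with $|g_{x'}''|$ bounded below near those points by a fixed fraction of $\min_j|g_x''(s_j)|$. Since $\|g_x-g_{x'}\|_{C^2}$ is small under small perturbations of $x$ or of the curves, this yields local uniform control of $N$ and $\delta$, and hence—by the explicit form $C=48N(\delta/2)^{-1/2}+(N+1)\kappa$ in Lemma~\ref{statPhase1d}, with $\kappa$ also continuous in $g$ away from stationary points—continuity of the stationary-phase constant at $x$, in both $x$ and the curves.

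Finally, assembling: $C_x$ is a finite sum/product of the quantities discussed in (a), (b), (c) over the finitely many domains $\Omega_j$ and the finitely many $C^2$ components of each $\partial\Omega_j$. A finite algebraic combination of continuous functions is continuous, so $C_x$ is continuous in $x$ on $\Omega\setminus\mathcal{N}$, proving (i), and continuous with respect to $\cup\partial\Omega_j$ in the $C^2$ norm, proving (ii). I would remark that the hypothesis $x\notin\mathcal{N}$ is used only to keep the stationary-point structure stable; everything else is automatic from the smooth, explicit dependence of $\phi_x$ on its parameters and the fact that $x$ stays in the interior of its own domain.
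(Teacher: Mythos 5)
Your proposal is correct and follows essentially the same route as the paper: the paper's proof simply writes out the explicit formula for $C_x$ (a finite combination of the stationary-phase constant $C_x'$ and of $L^\infty$/$W^{1,\infty}$/$W^{2,\infty}$ norms of $\nabla\phi/\enorm{\nabla\phi}^2$ and its divergence) and asserts continuity of each piece, while you additionally supply the justification the paper leaves implicit, namely that Lemma~\ref{implicitComplement} applied to $g_x'$ stabilizes the number and nondegeneracy of the stationary points so that $C=48N(\delta/2)^{-1/2}+(N+1)\kappa$ varies continuously. One small imprecision: for the domain $\Omega_1$ containing $x$ the relevant norms must be taken over $\Omega_1\setminus B_{r_1}(x)$ (the paper's $\Omega_1^*$), not over $\Omega_1\setminus\{x\}$, since $\enorm{\nabla\phi}^{-2}$ is unbounded near $z=x$.
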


\begin{proof}
Let $N, \, \Omega_j$ and $r_1$ be as in the proof of 
Theorem \ref{reconstructionTheorem}.
Let $\Omega_j^* = \Omega_j$ for $j =2,..., N$ and let 
$$\Omega_1^* = \Omega_1 \setminus B_{r_1}(x).$$
The constant $C_x$ is given by

\begin{eqnarray}
C_x &=& \sum_{j=1}^N C_x' \norm{\frac{\nabla \phi \cdot \mathbf{n}}{\enorm{\nabla \phi}^2}}{W^{1,\infty} (\partial \Omega_j^*)} + \norm{\frac{\nabla \phi  \cdot \mathbf{n}}{\enorm{\nabla \phi}^2}}{L^\infty(\partial\Omega_j^*)} \nonumber \\
&& \times \left( \norm{\phi}{\dot{W}^{1,\infty}(\partial\Omega_j^*)} \norm{\enorm{\nabla \phi}^{-2}}{L^\infty(\partial\Omega_j^*)} + \norm{\diver \frac{\nabla \phi}{\enorm{\nabla \phi}^2}}{L^\infty(\partial\Omega_j^*)} \right) \nonumber \\
&& + \left( \norm{\phi}{W^{2,\infty}(\Omega_j^*)} \norm{\enorm{\nabla \phi}^{-2}}{W^{1,\infty}(\Omega_j^*)} + \norm{\diver \frac{\nabla \phi}{\enorm{\nabla \phi}^2}}{W^{1,\infty}(\Omega_j^*)} \right)^2. \nonumber
\end{eqnarray} 
The constant $C_x'$ appears in equation \eqref{auxConstant} 
by the use of Lemma \ref{statPhase1d} (taking $g=\phi\vert_{\partial \Omega_j}$
and $h=\frac{q_j \nabla \phi \cdot \mathbf{n}}{\enorm{\nabla \phi}^2}\big\vert_{\partial\Omega_j}$),
so it is continuous with respect to $x$ and 
with respect to $\cup \partial \Omega_j$ in the $C^2$ norm,
and as $\nabla \phi$ does not vanish inside any of the $\Omega_j^*$,
then so is $C_x$, concluding the proof.
\end{proof}

As is to be expected, the error in the reconstruction 
increases the closer we move to the discontinuities of the potential,
as the constant $C_x$ blows up, and we are unable to recover at the discontinuities.
It is perhaps more interesting that, for certain potentials,
there are points where the reconstruction fails which are far
from the discontinuities of the potential.

Indeed, consider the potential given by $V =  \goodchi_{\Omega_1}$, where
$\Omega_1$ is the rhombus with vertices at $(0,0), (1, 1), (2, 0)$
and $(1, -1)$; see Figure 1. Consider the problem of recovering the potential inside 
$\Omega = [-2,2] \times [-2,2]$. We might expect to be able to recover at the points $x=(-t,-t)$, for $t \in (0, 2)$, far from the potential.
However, by Alessandrini's identity 
\cite[Lemma 1]{A88}, we know that the reconstructed potential $\tilde{V}$ at $x$ is the limit as $\lambda$ tends to infinity of  
\begin{equation}
\frac{\lambda}{\pi} \int_{\partial \Omega} e^{i \lambda \overline{\psi_x}} \left( \Lambda_V - \Lambda_0 \right) \left[ u_{\lambda,x} \right] = \frac{\lambda}{\pi} \int_\Omega e^{i \lambda \phi_x} \, V \, (1 + w_{\lambda,x}), \nonumber
\end{equation}
where $\phi_x = \psi_x + \overline{\psi}_x$, which can be rewritten as
\begin{equation}
\frac{\lambda}{\pi} \left( \int_\Omega e^{i \lambda \phi_x} \, V \, w_{\lambda,x} + \int_{\Omega \setminus \Omega_1} e^{i \lambda \phi_x} \, V + \int_{\Omega_1} e^{i \lambda \phi_x} \, V  \right). \nonumber 
\end{equation}
For the first term we can use Lemma \ref{twNorm} and part $ii)$ of Lemma \ref{weakDerivatives} to obtain
\begin{equation}
\left\vert \frac{\lambda}{\pi} \int_\Omega e^{i \lambda \phi_x} \, V \, w_{\lambda,x} \right\vert \leq C \, \lambda^{-r} \ \norm{V}{D^{2,r}}^2, \quad \text{as } \lambda \rightarrow \infty, \nonumber
\end{equation}
for any $0 < r < 1/2$. As the potential is equal to 1 inside $\Omega_1$
and zero in the rest of the domain, this yields
\begin{equation}
\tilde{V}(x) = \lim_{\lambda\to\infty}\frac{\lambda}{\pi} \int_{\Omega_1} e^{i \lambda \phi_x}. \nonumber 
\end{equation}
Using Green's first identity twice we get
\begin{eqnarray}
\frac{\lambda}{\pi}\int_{\Omega_1} e^{i \lambda \phi_x} &=& \frac{1}{i \pi} \int_{\partial\Omega_j} e^{i \lambda \phi_x} \, \frac{\nabla \phi_x}{\enorm{\nabla \phi_x}^2} \cdot \mathbf{n} \nonumber \\
&& + \frac{1}{\pi\lambda} \int_{\partial\Omega_1} e^{i \lambda \phi_x} \, \diver \left(\frac{\nabla \phi_x}{\enorm{\nabla \phi_x}^2} \right) \frac{\nabla \phi_x}{\enorm{\nabla \phi_x}^2} \cdot \mathbf{n} \nonumber \\
&& - \frac{1}{\pi\lambda} \int_{\Omega_1} e^{i \lambda \phi_x} \, \diver \left( \diver \left(\frac{\nabla \phi_x}{\enorm{\nabla \phi_x}^2} \right) \frac{\nabla \phi_x}{\enorm{\nabla \phi_x}^2} \right). \nonumber
\end{eqnarray}
As we have seen in the proof of Theorem \ref{reconstructionTheorem}, the second and third terms converge to zero as $\lambda$ tends to infinity, and for the first term we can write
\begin{eqnarray}
\int_{\partial\Omega_j} e^{i \lambda \phi_x} \, \frac{\nabla \phi_x}{\enorm{\nabla \phi_x}^2} \cdot \mathbf{n} &=& \int_{l_1} e^{i \lambda \phi_x} \, \frac{\nabla \phi_x}{\enorm{\nabla \phi_x}^2} \cdot \mathbf{n_1}
+ \int_{l_2} e^{i \lambda \phi_x} \, \frac{\nabla \phi_x}{\enorm{\nabla \phi_x}^2} \cdot \mathbf{n_2} \nonumber \\
&&+ \int_{l_3} e^{i \lambda \phi_x} \, \frac{\nabla \phi_x}{\enorm{\nabla \phi_x}^2} \cdot \mathbf{n_3}
+ \int_{l_4} e^{i \lambda \phi_x} \, \frac{\nabla \phi_x}{\enorm{\nabla \phi_x}^2} \cdot \mathbf{n_4} \nonumber
\end{eqnarray}
where
\begin{eqnarray}
l_1 &=& (s, s) \text{ for } s \in (0, 1), \nonumber \\
l_2 &=& (1 + s, 1 - s) \text{ for } s \in (0, 1), \nonumber \\
l_3 &=& (2 - s, -s) \text{ for } s \in (0, 1), \nonumber \\
l_4 &=& (1 - s, s - 1) \text{ for } s \in (0, 1). \nonumber
\end{eqnarray}
As we have $\phi(z) = (z_1 + t)^2 - (z_2 + t)^2$, we see that
\begin{eqnarray}
\phi_x|_{l_1} (s) &=& 0, \nonumber \\
\phi_x|_{l_2} (s) &=& 4 s (t + 1), \nonumber \\
\phi_x|_{l_3} (s) &=& 4 (t - s + 1), \nonumber \\
\phi_x|_{l_4} (s) &=& 4 t (1 - s). \nonumber
\end{eqnarray}
Therefore, we can apply Lemma~\ref{statPhase1d} to three of the sides;
$$ \int_{l_j} e^{i \lambda \phi_x} \, \frac{\nabla \phi_x}{\enorm{\nabla \phi_x}^2} \cdot \mathbf{n_j} = O(\lambda^{-1/2}), \quad \text{for } j = 2, 3, 4,$$
and on the other hand we have
$$ \int_{l_1} e^{i \lambda \phi_x} \, \frac{\nabla \phi_x}{\enorm{\nabla \phi_x}^2} \cdot \mathbf{n_1} = \int_0^1  \frac{-\sqrt{2}}{4 (s + t)} \, ds = \frac{\sqrt{2}}{4} \left( \log(t) - \log(t + 1) \right). $$
Putting everything together we obtain
$$ \tilde{V}(x) = \frac{\sqrt{2}i}{4\pi} \log(1+ 1/t)\neq 0.$$

\begin{figure}
\centering
\includegraphics[width=8cm]{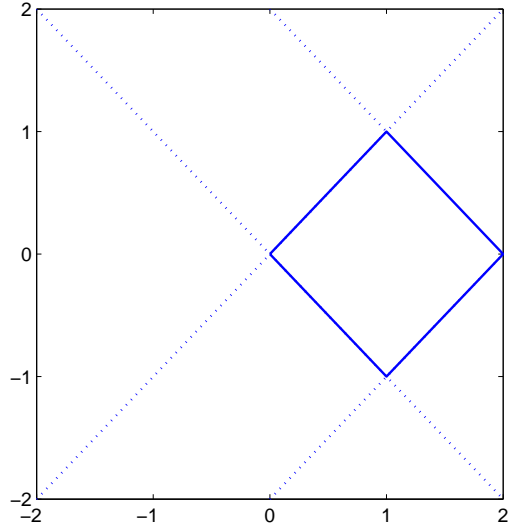}
\vspace{-0.1cm}
\caption{
Solid lines are the discontinuities of the potential and dashed lines are points far from the discontinuities where the recovery fails.
}
\end{figure}

\section{Stability} \label{stabilitySection}

We begin with some preliminary results that we will require for the proof of the stability estimates.

\begin{lemma} \label{twwNorm}
Let $V_1, V_2 \in H^{s}$, where $0 < 2 s \leq 1$ and let $F \in L^2$. Then there exists a constant $C$ such that 
\begin{equation}
\sup_{x \in \Omega} \abs{T^\lambda_{w_1 w_2}[F](x)} \leq C \lambda^{-2s} \norm{F}{L^2} \norm{V_1}{\dot{H}^{s}} \norm{V_2}{\dot{H}^{s}} \nonumber
\end{equation}
when $\lambda$ is sufficiently large.
\end{lemma}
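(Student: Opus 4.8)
The plan is to pair $F$ directly against the rest of the integrand via Cauchy--Schwarz and then estimate the product $w_1 w_2$ in $L^2$, where $w_i$ denotes the function $w_{\lambda,x}$ associated with $V_i$, i.e.\ $w_i = (I - S^\lambda_{V_i})^{-1} S^\lambda_1[V_i]$. Since the phase $\psi + \overline{\psi}$ is real, $|e^{i\lambda(\psi + \overline{\psi})}| = 1$, so by Cauchy--Schwarz and H\"older's inequality
$$ \abs{T^\lambda_{w_1 w_2}[F](x)} \leq \frac{\lambda}{\pi}\norm{F}{L^2}\norm{w_1 w_2}{L^2} \leq \frac{\lambda}{\pi}\norm{F}{L^2}\norm{w_1}{L^4}\norm{w_2}{L^4}, $$
and the planar Sobolev embedding $\dot{H}^{1/2}(\mathbb{R}^2) \hookrightarrow L^4(\mathbb{R}^2)$ bounds the last quantity by $C\tfrac{\lambda}{\pi}\norm{F}{L^2}\norm{w_1}{\dot{H}^{1/2}}\norm{w_2}{\dot{H}^{1/2}}$. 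Hence it suffices to establish the improved decay estimate
$$ \norm{w_i}{\dot{H}^{1/2}} \leq C\lambda^{-(1/2 + s)}\norm{V_i}{\dot{H}^{s}}, \qquad i = 1, 2, $$
for $\lambda$ sufficiently large, since substituting this above and using $\tfrac{\lambda}{\pi}\lambda^{-(1 + 2s)} = \tfrac1\pi\lambda^{-2s}$ gives the lemma. As in Lemmas~\ref{sNorm} and~\ref{twNorm} all constants are uniform in $x \in \Omega$.

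To obtain the $\dot{H}^{1/2}$ bound I would not invoke boundedness of $(I - S^\lambda_{V_i})^{-1}$ on $\dot{H}^{1/2}$ --- this is unavailable when $s < 1/2$ --- but instead expand $w_i$ in its Neumann series $w_i = \sum_{k \geq 1} a_k$, with $a_0 = 1$ and $a_k = (S^\lambda_{V_i})^k[1] = S^\lambda_1[V_i\, a_{k-1}]$, and estimate term by term. The Born term is immediate from Lemma~\ref{sNorm}: for every $0 < \sigma < 1$ it lies in $\dot{H}^\sigma$ with $\norm{a_1}{\dot{H}^\sigma} \leq C\lambda^{-(1 - \sigma + \min\{s,\sigma\})}\norm{V_i}{\dot{H}^s}$, and the choice $\sigma = 1/2$ together with $s \leq 1/2$ gives $\norm{a_1}{\dot{H}^{1/2}} \leq C\lambda^{-(1/2 + s)}\norm{V_i}{\dot{H}^s}$. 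For the remaining terms I would run an induction on a fixed exponent $\sigma_0 \in (1 - s, 1)$: if $a_{k-1} \in \dot{H}^{\sigma_0}$, then by the standard product estimate in homogeneous Sobolev spaces $\dot{H}^{s} \cdot \dot{H}^{\sigma_0} \hookrightarrow \dot{H}^{s + \sigma_0 - 1}$ the product $V_i\, a_{k-1}$ lies in the \emph{positive} space $\dot{H}^{s + \sigma_0 - 1}$, so Lemma~\ref{sNorm} applies once more and returns $a_k$ to $\dot{H}^{\sigma_0}$ and to $\dot{H}^{1/2}$; keeping track of the constants, each pass of the recursion gains a factor $C\lambda^{-\gamma}\norm{V_i}{\dot{H}^s}$ with $\gamma > 0$, so for $\lambda$ large (depending only on $\norm{V_i}{\dot{H}^s}$) the series converges in $\dot{H}^{1/2}$ and is dominated by $a_1$, which is the required bound.

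I expect this induction to be the only real obstacle. The difficulty is that $V_i$ is merely of class $\dot{H}^s$ with possibly $s < 1/2$, so $S^\lambda_{V_i}$ fails to be bounded on $\dot{H}^{1/2}$ and one cannot simply quote \cite[Lemma~2.3]{AFR13} on that space. The remedy is to use the full strength of Lemma~\ref{sNorm}, namely that $S^\lambda_1$ maps $\dot{H}^s$ into $\dot{H}^\sigma$ for \emph{every} $\sigma < 1$; this makes the Born approximation $S^\lambda_1[V_i]$ "almost $H^1$", so that each subsequent multiplication by $V_i$ lands in a genuinely positive Sobolev space and the bootstrap can proceed. An alternative is to interpolate between the elementary bound $\norm{w_i}{\dot{H}^s} \leq C\lambda^{-1}\norm{V_i}{\dot{H}^s}$ (which comes out of the proof of Lemma~\ref{twNorm}) and the near-$\dot{H}^1$ bounds on the individual Neumann terms, the interpolated decay exponent at $\dot{H}^{1/2}$ being again $1/2 + s$. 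Once the $\dot{H}^{1/2}$ estimate is established, the displayed chain of inequalities completes the proof.
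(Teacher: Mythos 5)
Your proof is correct and follows the same backbone as the paper's: bound $\abs{T^\lambda_{w_1 w_2}[F](x)}$ by $\frac{\lambda}{\pi}\norm{F}{L^2}\norm{w_1}{L^4}\norm{w_2}{L^4}$, pass to $\dot{H}^{1/2}$ by the Sobolev (Hardy--Littlewood--Sobolev) embedding, and extract the decay $\lambda^{-(1/2+s)}$ for each $\norm{w_i}{\dot{H}^{1/2}}$ from Lemma~\ref{sNorm} applied to the Born term $S_1^\lambda[V_i]$, giving the net rate $\lambda\cdot\lambda^{-(1/2+s)}\cdot\lambda^{-(1/2+s)}=\lambda^{-2s}$ exactly as in the paper. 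The one place where you genuinely diverge is the treatment of the resolvent: the paper simply quotes the boundedness of $(I-S_{V_i}^\lambda)^{-1}$ from \cite[Lemma 2.3]{AFR13} and applies it on $\dot{H}^{1/2}$, whereas you replace this by an explicit Neumann-series bootstrap through an auxiliary space $\dot{H}^{\sigma_0}$ with $\sigma_0\in(1-s,1)$. Your bootstrap is sound: the product estimate $\dot{H}^{s}\cdot\dot{H}^{\sigma_0}\hookrightarrow\dot{H}^{s+\sigma_0-1}$ keeps each iterate in a positive-order space so that Lemma~\ref{sNorm} applies, each pass costs a factor $C\lambda^{-s}\norm{V_i}{\dot{H}^{s}}$, and the tail is dominated by the Born term once $\lambda$ is large (depending on $\norm{V_i}{\dot{H}^{s}}$, consistent with the statement). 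What this buys is that you never need $S_{V_i}^\lambda$ to be a contraction on $\dot{H}^{1/2}$ itself, which, as you observe, is not immediate from the cited lemma when $V_i$ only lies in $\dot{H}^{s}$ with $s<1/2$; the price is a longer argument for a step the paper dispatches in one line. Either route yields the stated bound.
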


\begin{proof}
By H\"older's inequality and the Hardy-Littlewood-Sobolev lemma we get
\begin{eqnarray}
\abs{T^\lambda_{w_1 w_2} [F](x)} 
& \leq & \lambda \norm{F w_1 w_2}{L^1} \nonumber \\
& \leq & \lambda \norm{F}{L^2} \norm{w_1}{L^4} \norm{w_2}{L^4} \nonumber \\
& \leq & C \lambda \norm{F}{L^2} \norm{w_1}{\dot{H}^{1/2}} \norm{w_2}{\dot{H}^{1/2}}. \nonumber
\end{eqnarray}
As $(I - S_V^\lambda)^{-1}$ is bounded for $\lambda$ sufficiently large
(see \cite[Lemma 2.3]{AFR13}), this yields
\begin{eqnarray}
\abs{T^\lambda_{w_1 w_2} [F](x)} & \leq & C \lambda \norm{F}{L^2} \norm{S_1^\lambda [V_1]}{\dot{H}^{1/2}} \norm{S_1^\lambda [V_2]}{\dot{H}^{1/2}}. \nonumber
\end{eqnarray}
Applying Lemma \ref{sNorm} twice concludes the 
proof.
\end{proof}

We will also require the following crude bound for Bukhgeim solutions.

\begin{lemma} \label{uBound}
Let $0<r<1/2$, and let $V$ be a piecewise $W^{2,1}$-potential contained in a bounded planar domain $\Omega$ with diameter $d$. Then there exists a constant $C$ depending on  
$\Omega$ such that the Bukhgeim solutions satisfy
$$ \norm{u_{\lambda,x}}{H^1(\Omega)} \leq C e^{\lambda d^2} \left( 1 + \norm{V}{D^{2,r}}^2 \right) $$
whenever $\lambda$ is sufficiently large.
\end{lemma}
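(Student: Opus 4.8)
The plan is to bound $\norm{u_{\lambda,x}}{H^1(\Omega)}$ by splitting $u_{\lambda,x} = e^{i\lambda\psi_x}(1+w_{\lambda,x})$ and estimating the exponential factor, the constant term, and the corrector $w_{\lambda,x}$ separately. Since $\psi_x(z) = \tfrac12\big((z_1-x_1)+i(z_2-x_2)\big)^2$ has $\mathrm{Re}(i\psi_x)$ bounded by a constant multiple of $|z-x|^2 \le d^2$ on $\Omega$, we have $\norm{e^{i\lambda\psi_x}}{L^\infty(\Omega)} \le e^{C\lambda d^2}$, and a derivative of $e^{i\lambda\psi_x}$ brings down a factor $\lambda|z-x| \lesssim \lambda d$, so $\norm{e^{i\lambda\psi_x}}{W^{1,\infty}(\Omega)} \le C\lambda e^{C\lambda d^2}$ (the polynomial factor $\lambda$ being absorbed into $e^{\lambda d^2}$ after a harmless adjustment of the exponent, or one simply keeps a factor $\lambda$ which is dominated by the exponential for large $\lambda$). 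Thus it suffices to control $\norm{1+w_{\lambda,x}}{H^1(\Omega)}$, i.e. $1 + \norm{w_{\lambda,x}}{H^1(\Omega)}$, and show it is $\lesssim 1 + \norm{V}{D^{2,r}}^2$.

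For the corrector, recall $w_{\lambda,x} = (I - S^\lambda_V)^{-1} S^\lambda_1[V]$. Since $(I - S^\lambda_V)^{-1}$ is bounded on $\dot H^s$ uniformly in $\lambda$ large (by \cite[Lemma 2.3]{AFR13}), and $\norm{S^\lambda_1[V]}{\dot H^s} \le C\lambda^{-\tau}\norm{V}{\dot H^s}$ by Lemma~\ref{sNorm} with appropriate exponents, we get $\norm{w_{\lambda,x}}{\dot H^s} \lesssim \norm{V}{\dot H^s}$ for a suitable $s \in (0,1)$; then Lemma~\ref{weakDerivatives}(ii) converts this to $\norm{w_{\lambda,x}}{\dot H^s} \lesssim \norm{V}{D^{2,r}}$ for $0 < r < 1/2$, $s = r$. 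This gives an $H^s$ bound, not yet an $H^1$ bound; to upgrade to $H^1$ I would go back to the defining equation $4\partial_z e^{i\lambda\phi}\partial_{\bar z} w = e^{i\lambda\phi} V(1+w)$ (with $\phi = \psi_x + \overline{\psi_x}$) and estimate $\nabla w$ directly. Writing $w = \tfrac14 \partial_{\bar z}^{-1}\big[M^{-\lambda}\partial_z^{-1}[M^\lambda V(1+w)]\big]$, one has $\partial_{\bar z} w = \tfrac14 M^{-\lambda}\partial_z^{-1}[M^\lambda V(1+w)]$ up to boundary terms, and similarly for $\partial_z w$; the point is that the Beurling–Ahlfors transform and the solid Cauchy transform are bounded on $L^2$, so $\norm{\nabla w}{L^2}$ is controlled by $\norm{V(1+w)}{L^2}$ times a factor coming from $M^{\pm\lambda}$ and the inner Cauchy transform, which is at most polynomial in $\lambda$ (again absorbed into $e^{\lambda d^2}$) times $\norm{V}{L^2}(1 + \norm{w}{L^\infty})$ — and $\norm{w}{L^\infty}$ is itself controlled via Sobolev embedding or the crude a priori bound from the Neumann series in a slightly better space. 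Collecting the factors $\norm{V}{L^2} \lesssim \norm{V}{D^{2,r}}$ and $\norm{V}{L^2}\norm{w}{\dot H^{1/2}} \lesssim \norm{V}{D^{2,r}}^2$ yields the quadratic dependence, and all the $\lambda$-powers are dominated by $e^{\lambda d^2}$ for $\lambda$ large.

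The main obstacle I anticipate is the upgrade from the $\dot H^s$ bound on $w$ (which is what Lemmas~\ref{mNorm}--\ref{sNorm} directly give) to the $H^1(\Omega)$ bound: the operators $M^{\pm\lambda}$ gain decay only between homogeneous Sobolev spaces of order $<1$, so one cannot simply push the argument of Lemma~\ref{sNorm} up to $s_2 = 1$. The resolution is that we are allowed a crude, $\lambda$-exponential constant here — this lemma is only a technical input for the stability section — so rather than seeking any decay one just needs $L^2$-boundedness of $\partial_z^{-1}$, $\partial_{\bar z}^{-1}$ and the Beurling transform, together with the trivial bound $\norm{M^{\pm\lambda}}{L^2 \to L^2} \le 1$, and accepts a loss of at most a fixed power of $\lambda$ from the single inner Cauchy transform acting on a compactly supported function (which maps $L^p$ into $W^{1,p}$ locally). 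That fixed power, and the constant $e^{C\lambda d^2}$ from $e^{i\lambda\psi_x}$, combine into $C e^{\lambda d^2}$ after adjusting constants, completing the proof.
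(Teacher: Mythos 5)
Your proposal is essentially correct and rests on the same ingredients as the paper's proof: the pointwise bound $\abs{e^{i\lambda\psi_x}}\le e^{\lambda d^2/2}$ on $\Omega$, the uniform boundedness of $(I-S_V^\lambda)^{-1}$ together with Lemma~\ref{sNorm} to get $\norm{w}{\dot{H}^{1/2}}\lesssim\norm{V}{\dot{H}^{r}}$, H\"older's inequality plus Sobolev embedding, and Lemma~\ref{weakDerivatives} to convert everything into $\norm{V}{D^{2,r}}$. Where you genuinely diverge is in the treatment of $\nabla u$. The paper never differentiates $e^{i\lambda\psi}$ or $w$ at all: it uses the equation $4\partial_z\partial_{\overline{z}}u=Vu$ to replace $\partial_{\overline{z}}u$ and $\partial_z u$ by $\tfrac14\partial_z^{-1}[Vu]$ and $\tfrac14\partial_{\overline{z}}^{-1}[Vu]$, so all three pieces of $\norm{u}{H^1(\Omega)}$ reduce in one stroke to $\norm{Vu}{L^2}$ and $\norm{u}{L^2}$ via the $L^2$-boundedness of the Cauchy transforms on compactly supported data; no product rule, no Beurling transform, and no stray power of $\lambda$. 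Your route --- differentiating the product and reading $\partial_{\overline{z}}w$ and $\partial_z w$ off the integral equation, the latter through the Beurling transform --- also works, at the cost of the extra factor $\lambda$ that you correctly absorb into $e^{\lambda d^2}$, but it is longer than necessary.

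One step does need repair. You estimate $\norm{V(1+w)}{L^2}$ by $\norm{V}{L^2}\left(1+\norm{w}{L^\infty}\right)$ and assert that $\norm{w}{L^\infty}$ is controlled "via Sobolev embedding or a crude a priori bound". The information actually available is $w\in\dot{H}^{1/2}$, and $\dot{H}^{1/2}(\mathbb{R}^2)$ does not embed into $L^\infty$; nor is the Cauchy transform of a compactly supported $L^2$ function bounded in general. The correct pairing --- for which you already have every tool --- is $\norm{V(1+w)}{L^2}\le\norm{V}{L^4}\norm{1+w}{L^4}$ together with the Hardy--Littlewood--Sobolev embedding $\norm{w}{L^4}\lesssim\norm{w}{\dot{H}^{1/2}}$, followed by $\norm{V}{L^4}\lesssim\norm{V}{D^{2,r}}$ (Sobolev embedding applied to each $q_j$) and $\norm{w}{\dot{H}^{1/2}}\lesssim\norm{V}{\dot{H}^{r}}\lesssim\norm{V}{D^{2,r}}$. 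This is exactly the paper's computation and yields the stated bound $Ce^{\lambda d^2}\left(1+\norm{V}{D^{2,r}}^2\right)$.
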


\begin{proof} Writing $u=u_{\lambda,x}$, 
\begin{eqnarray}
\norm{u}{H^1(\Omega)} &\leq& C \left( \norm{u}{L^2(\Omega)} + \norm{\partial_{\overline{z}}u}{L^2(\Omega)}  + \norm{\partial_z u}{L^2(\Omega)} \right) \nonumber \\
&=& C \norm{e^{i \lambda \psi} (1+w)}{L^2(\Omega)} + C \norm{\partial_{z}^{-1} V e^{i \lambda \psi} (1+w) }{L^2(\Omega)}  \nonumber \\
&& +\, C \norm{\partial_{\overline{z}}^{-1} V e^{i \lambda \psi} (1+w) }{L^2(\Omega)}  \nonumber
\end{eqnarray}
Note that $\partial_{z}^{-1}$ and $\partial_{\overline{z}}^{-1}$ 
are bounded operators, as $V$ has compact support (see for example \cite[Theorem 4.3.12]{AIM08}). As we have 
$\norm{\cdot}{L^2(\Omega)} \leq C \norm{\cdot}{L^4(\Omega)}$, 
then using H\"older's inequality leads to
\begin{eqnarray}
\norm{u}{H^1(\Omega)} &\leq& C \norm{e^{i \lambda \psi}}{L^\infty(\Omega)}  \left( 1 + \norm{V}{L^4} \right) \norm{1 + w}{L^4(\Omega)} \nonumber \\
&\leq& C e^{i \lambda d^2} \left( 1 + \norm{V}{L^4} \right) \left( 1 + \norm{w}{L^4(\Omega)} \right), \nonumber 
\end{eqnarray}
and by the Hardy-Littlewood-Sobolev inequality we get
\begin{eqnarray}
\norm{u}{H^1(\Omega)} &\leq& C e^{i \lambda d^2} \left( 1 + \norm{V}{L^4} \right) \left( 1 + \norm{w}{\dot{H}^{1/2}} \right). \nonumber 
\end{eqnarray}
As $(I - S^\lambda_{V})^{-1}$ is bounded for $\lambda$ sufficiently large 
(see \cite[Lemma~2.3]{AFR13}), we have
\begin{equation}
\norm{u}{H^1(\Omega)} \leq C e^{i \lambda d^2} \left( 1 + \norm{V}{L^4} \right) \left( 1 + \norm{S_1^\lambda [V]}{\dot{H}^{1/2}} \right). \nonumber
\end{equation}
Now, by Lemma \ref{sNorm} we get
\begin{equation}
\norm{u}{H^1(\Omega)} \leq C e^{i \lambda d^2} \left( 1 + \norm{V}{L^4} \right) \left( 1 + \norm{V}{\dot{H}^{r}} \right). \nonumber
\end{equation}
Using Sobolev embedding (see for example \cite[Theorem 4.12, Part 1, Case~A]{AF03}) we have
$$ \norm{V}{L^4} \leq C \sum_{j=1}^N \norm{q_j}{L^4} \leq \norm{V}{D^{2,r}}.$$
Using part $ii)$ of Lemma~\ref{weakDerivatives} Lemma concludes the proof.
\end{proof}

We now prove an a priori stability estimate for reconstruction from 
the DtN map in the $L^\infty$ norm. Note that the result is interesting
when there is a priori knowledge of where, approximately, the 
discontinuities lie, as the constant term depends on the point under
consideration with respect to the discontinuities.
The result has been stated in the following form as in practical situations 
one could consider where a potential might lie, given a noisy reconstruction 
of it. If the assumption that the discontinuities of the potentials
are closed to each other was dropped, the constant $C_x$ would depend on
both $\lbrace \partial \Omega_{1,j} \rbrace$ and $\lbrace \partial \Omega_{2,j} \rbrace$, the discontinuities of $V_1$ and $V_2$ respectively.

\begin{theorem} \label{dtnStability}
Let $0 < s-2 <  2r < 1$ and let $V_1, V_2$ be piecewise $W^{s,1}$-potentials 
supported on a Lipschitz domain $\Omega$ in $\mathbb{R}^2$
such that their discontinuities are close enough with respect to the $C^2$ norm.
Then, for almost every $x \in \Omega$, there exists a constant $C_x = C(x, \cup \partial \Omega_{1,j})$
such that
\begin{equation}
\abs{V_1(x) - V_2(x)} \leq C_x \big|\ln \enorm{\Lambda_{V_1} - \Lambda_{V_2}} \big|^{1-s/2} ( \kappa + \kappa^3 )  \nonumber
\end{equation}
whenever $\Lambda_{V_1}$ and $\Lambda_{V_2}$ are close enough, where $\kappa = \max \lbrace \norm{V_1}{D^{s,r}}, \norm{V_2}{D^{s,r}} \rbrace$.
\end{theorem}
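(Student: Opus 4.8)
The plan is to combine the quantitative reconstruction estimate of Theorem~\ref{reconstructionTheorem} with a stability estimate for the boundary term as a function of $\lambda$, and then optimise in $\lambda$. The key observation is that the reconstruction formula evaluated at a large but finite $\lambda$ already recovers $V_i(x)$ up to an error of size $\lambda^{1-s/2}(\kappa+\kappa^2)$, by Theorem~\ref{reconstructionTheorem}; if we can show that replacing $\Lambda_{V_1}$ by $\Lambda_{V_2}$ changes the value of the boundary integral by at most something like $e^{C\lambda}\enorm{\Lambda_{V_1}-\Lambda_{V_2}}$, then a triangle inequality gives
$$
\abs{V_1(x)-V_2(x)} \leq C_x\,\lambda^{1-s/2}(\kappa+\kappa^2) + e^{C\lambda}\enorm{\Lambda_{V_1}-\Lambda_{V_2}}
$$
for all sufficiently large $\lambda$, and choosing $\lambda \sim c\,\big|\ln\enorm{\Lambda_{V_1}-\Lambda_{V_2}}\big|$ makes the second term negligible and produces the stated logarithmic rate.

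First I would fix $x$ outside the null set $\mathcal{N}$ of Lemma~\ref{continuousConstant} (for both potentials simultaneously; here is where the hypothesis that the discontinuities of $V_1$ and $V_2$ are close in $C^2$ is used, via Lemma~\ref{implicitComplement} and Lemma~\ref{continuousConstant}, so that a single $C_x=C(x,\cup\partial\Omega_{1,j})$ controls the reconstruction constant for both). Applying Theorem~\ref{reconstructionTheorem} to $V_1$ and to $V_2$ and subtracting gives, for $\lambda$ large,
$$
\abs{V_1(x)-V_2(x)} \leq \Big| \tfrac{\lambda}{\pi}\!\int_{\partial\Omega} e^{i\lambda\overline{\psi}}\big((\Lambda_{V_1}-\Lambda_0)[u^1_{\lambda,x}] - (\Lambda_{V_2}-\Lambda_0)[u^2_{\lambda,x}]\big) \Big| + C_x\,\lambda^{1-s/2}(\kappa+\kappa^2),
$$
where $u^i_{\lambda,x}$ is the Bukhgeim solution for $V_i$. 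Next I would estimate the boundary term: write it as $\tfrac{\lambda}{\pi}\int_{\partial\Omega}e^{i\lambda\overline\psi}(\Lambda_{V_1}-\Lambda_{V_2})[u^1_{\lambda,x}] + \tfrac{\lambda}{\pi}\int_{\partial\Omega}e^{i\lambda\overline\psi}(\Lambda_{V_2}-\Lambda_0)[u^1_{\lambda,x}-u^2_{\lambda,x}]$. The first piece is bounded by $\lambda\enorm{\Lambda_{V_1}-\Lambda_{V_2}}\norm{u^1_{\lambda,x}}{H^{1/2}(\partial\Omega)} \leq \lambda\enorm{\Lambda_{V_1}-\Lambda_{V_2}}\norm{u^1_{\lambda,x}}{H^1(\Omega)}$, which by the crude bound of Lemma~\ref{uBound} is $\leq C\lambda e^{\lambda d^2}(1+\kappa^2)\enorm{\Lambda_{V_1}-\Lambda_{V_2}}$. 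For the second piece one needs a stability estimate for the Bukhgeim solutions themselves, $\norm{u^1_{\lambda,x}-u^2_{\lambda,x}}{H^1(\Omega)}$, in terms of $\norm{V_1-V_2}{D^{s,r}}$ or similar; this follows by expanding $w^i=(I-S^\lambda_{V_i})^{-1}S^\lambda_1[V_i]$, using the resolvent identity $(I-S^\lambda_{V_1})^{-1}-(I-S^\lambda_{V_2})^{-1}=(I-S^\lambda_{V_1})^{-1}(S^\lambda_{V_1}-S^\lambda_{V_2})(I-S^\lambda_{V_2})^{-1}$, the operator bounds of Lemma~\ref{sNorm}, and boundedness of the resolvents (Lemma~2.3 of~\cite{AFR13}). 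This controls the second piece by another factor of the form $e^{C\lambda}(1+\kappa)\norm{V_1-V_2}{\dot H^r}$, and $\norm{V_1-V_2}{\dot H^r}$ is in turn controlled by $\enorm{\Lambda_{V_1}-\Lambda_{V_2}}$ to some power — or, more simply, one absorbs everything into a single $e^{C\lambda}$ and uses that both $\kappa$-factors are already present. Collecting, the boundary term is $\leq e^{C\lambda}(\kappa+\kappa^3)\enorm{\Lambda_{V_1}-\Lambda_{V_2}}$ for an absolute constant $C$ (enlarging $C$ to absorb the polynomial-in-$\lambda$ prefactor and the $d^2$).

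Finally I would optimise: with $\epsilon:=\enorm{\Lambda_{V_1}-\Lambda_{V_2}}$ small, choose $\lambda = \tfrac{1}{2C}\,\abs{\ln\epsilon}$, so that $e^{C\lambda}\epsilon = \epsilon^{1/2}\to 0$ and is dominated by $\lambda^{1-s/2}$ (since $1-s/2<0$, $\lambda^{1-s/2}\to 0$ only polynomially-slowly while $\epsilon^{1/2}$ is exponentially small in $\lambda$). Both terms then combine to give $\abs{V_1(x)-V_2(x)}\leq C_x\,\lambda^{1-s/2}(\kappa+\kappa^3) = C_x\,\abs{\ln\epsilon}^{1-s/2}(\kappa+\kappa^3)$ after adjusting the constant, which is the claimed bound; one checks the required largeness of $\lambda$ translates into the stated ``$\Lambda_{V_1},\Lambda_{V_2}$ close enough''. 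The main obstacle I expect is the stability estimate for the Bukhgeim solutions $u^1-u^2$ in $H^1(\Omega)$: one must track how the difference $S^\lambda_{V_1}-S^\lambda_{V_2}=S^\lambda_1[(V_1-V_2)\cdot]$ propagates through the Neumann series and the (only barely bounded) resolvents, and ensure the resulting $\lambda$-dependence stays at worst exponential so that it is killed in the optimisation step — the regularity bookkeeping with the $D^{s,r}$ norms and Lemma~\ref{weakDerivatives} is delicate but, given the earlier lemmas, routine.
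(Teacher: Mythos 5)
Your overall strategy (triangle inequality against the two reconstruction functionals, the crude $e^{\lambda d^2}$ bound on the Bukhgeim solutions, and the choice $\lambda\sim|\ln\|\Lambda_{V_1}-\Lambda_{V_2}\||$) is the same as the paper's, and the first and third steps are fine. The gap is in your splitting of the difference of the two boundary integrals. Your second piece,
$$\frac{\lambda}{\pi}\int_{\partial\Omega}e^{i\lambda\overline{\psi}}\left(\Lambda_{V_2}-\Lambda_0\right)\left[u^1_{\lambda,x}-u^2_{\lambda,x}\right],$$
carries no factor of $\enorm{\Lambda_{V_1}-\Lambda_{V_2}}$ at all, so the only way to make it small is through $\|u^1-u^2\|$. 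The resolvent identity gives $\|w_1-w_2\|_{\dot H^r}\lesssim \lambda^{-\tau}\|V_1-V_2\|_{\dot H^r}$, which is (i) circular --- $\|V_1-V_2\|$ is exactly what the theorem is trying to bound, and no estimate of $\|V_1-V_2\|_{\dot H^r}$ by a \emph{power} of $\enorm{\Lambda_{V_1}-\Lambda_{V_2}}$ is available (the problem is only logarithmically stable) --- and (ii) in any case only polynomially decaying in $\lambda$, whereas the boundary pairing against $e^{i\lambda\overline{\psi}}$ and the factor $e^{i\lambda\psi}$ inside $u^1-u^2$ each contribute $e^{c\lambda d^2}$ (the phases $\psi,\overline{\psi}$ have nonzero imaginary part). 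An exponentially large factor with only polynomial decay to fight it cannot survive the optimisation $\lambda\sim|\ln\epsilon|$; your fallback of ``absorbing everything into $e^{C\lambda}$'' leaves a term of size $e^{C\lambda}\kappa^2$ with no small factor, which is useless.

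The paper avoids this by never forming $u^1-u^2$. It uses the algebraic identity
$$V_1(1+w_1)-V_2(1+w_2)=(V_1-V_2)(1+w_1)(1+w_2)+V_2\,w_1-V_1\,w_2-(V_1-V_2)\,w_1w_2 .$$
The first term is precisely $\tfrac{\pi}{\lambda}$ times the Alessandrini pairing $\int_{\partial\Omega}(\Lambda_{V_1}-\Lambda_{V_2})[f_1]\,f_2$, which is the \emph{only} place the exponentially large $H^1$ norms of the Bukhgeim solutions appear, and there they are multiplied by $\enorm{\Lambda_{V_1}-\Lambda_{V_2}}$ and hence killed by the choice of $\lambda$. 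The three remaining cross terms are interior oscillatory integrals $T^\lambda_{w_1}[V_2]$, $T^\lambda_{w_2}[V_1]$, $T^\lambda_{w_1w_2}[V_1-V_2]$ with the \emph{real} bounded phase $\phi_x=\psi+\overline{\psi}$, so Lemmas \ref{twNorm} and \ref{twwNorm} give polynomial decay $\lambda^{-r}$, $\lambda^{-2s}$ with no exponential factors and with only $\kappa$'s (no $\|V_1-V_2\|$ that would need to be re-estimated). If you replace your decomposition by this one, the rest of your argument goes through essentially as written.
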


\begin{proof}
Let $d$ be the diameter of $\Omega$ and let
\begin{equation}
\lambda = \frac{-1}{6 d^2} \ln \enorm{\Lambda_{V_1} - \Lambda_{V_2}}. \label{lambdaValue}
\end{equation} 
Note that whenever $\Lambda_{V_1}$ and $\Lambda_{V_2}$ are sufficiently close, 
then $\lambda$ can be as large as we need. 

By the triangle inequality,
\begin{eqnarray}
\ \ \abs{V_1(x) - V_2(x)} &\leq& \abs{V_1(x) - T^\lambda_{1+w_1} [V_1](x)} + \abs{V_2(x) - T^\lambda_{1+w_2} [V_2](x)} \label{initial2} \\
&&+ \abs{T^\lambda_{1+w_1} [V_1](x) - T^\lambda_{1+w_2}[V_2](x)}. \nonumber
\end{eqnarray}
We can use Theorem \ref{reconstructionTheorem} on the first two terms to obtain
\begin{equation}
\abs{V_j(x) - T^\lambda_{1+w_j} [V_j](x)} \leq C_x \, \lambda^{1-s/2} (\kappa + \kappa^2) \label{statPhase2}
\end{equation}
where $C_x = C(x, \cup \partial \Omega_{1,j})$. We can take the same 
constant $C_x$ for both terms as it
is continuous with respect to the discontinuities in the $C^2$ norm
(due to Lemma \ref{continuousConstant}).
For the last term we have
\begin{eqnarray}
\abs{T^\lambda_{1+w_1} [V_1](x) - T^\lambda_{1+w_2}[V_2](x)} &\leq& \abs{T^\lambda_{(1+w_1)(1+w_2)}[V_1 - V_2](x)} \nonumber \\
&& + \abs{T^\lambda_{w_1}[V_2](x)} + \abs{T^\lambda_{w_2}[V_1](x)} \nonumber \\
&& + \abs{T^\lambda_{w_1 w_2}[V_1 - V_2](x)}. \nonumber
\end{eqnarray}
By Lemma \ref{twNorm} and part $ii)$ of Lemma \ref{weakDerivatives} we obtain
\begin{equation}
\norm{T^\lambda_{w_2}[V_1]}{L^\infty} \leq C \, \lambda^{1-s/2} \kappa^2 \quad \text{and} \quad \norm{T^\lambda_{w_1}[V_2]}{L^\infty} \leq C \, \lambda^{1-s/2} \kappa^2. \label{tCrossBound}
\end{equation}
and by Lemma \ref{twwNorm} and part $ii)$ of Lemma \ref{weakDerivatives} we obtain
\begin{equation}
\norm{T^\lambda_{w_1 w_2}[V_1 - V_2]}{L^\infty} \leq C \, \lambda^{1-s/2} \kappa^3. \label{tProdBound}
\end{equation}
Let $u_j$ be Bukhgeim solutions to $\Delta u_j = V_j u_j$. Then we have
\begin{eqnarray}
\norm{T^\lambda_{(1+w_1)(1+w_2)}[V_1 - V_2]}{L^\infty} &=& \frac{\lambda}{\pi} \norm{\int_\Omega (V_1 - V_2) u_1 u_2}{L^\infty} \nonumber \\
&=& \frac{\lambda}{\pi} \norm{\int_{\partial \Omega} f_1 \Lambda_{V_2} [f_2] - f_2 \Lambda_{V_1} [f_1]}{L^\infty}, \nonumber
\end{eqnarray}
where $f_j = u_j \vert_{\partial \Omega}$. As the 
DtN is a self-adjoint operator, we have that
\begin{equation}
\frac{\lambda}{\pi} \norm{\int_{\partial \Omega} f_1 \Lambda_{V_2} [f_2] - f_2 \Lambda_{V_1} [f_1]}{L^\infty} = \frac{\lambda}{\pi} \norm{\int_{\partial \Omega} \left( \Lambda_{V_1} - \Lambda_{V_2} \right) [f_1] f_2}{L^\infty}.  \nonumber
\end{equation}
For $f \in H^{1/2}$ the DtN map satisfies $\Lambda_v [f] \in H^{-1/2}$, where
$H^{-1/2}$ is the dual of $H^{1/2}$. Thus, for any $x \in \Omega$, we have
\begin{eqnarray}
\frac{\lambda}{\pi} \left\vert \int_{\partial \Omega} \left( \Lambda_{V_1} - \Lambda_{V_2} \right) [f_1] f_2 \right\vert &\leq& \lambda
\enorm{\Lambda_{V_1} - \Lambda_{V_2}} \norm{f_1}{H^{1/2}(\partial \Omega)} \norm{f_2}{H^{1/2}(\partial \Omega)} \nonumber \\
&\leq& \lambda \enorm{\Lambda_{V_1} - \Lambda_{V_2}} \norm{u_1}{H^1(\Omega)} \norm{u_2}{H^1(\Omega)} \nonumber
\end{eqnarray}
and we can use Lemma \ref{uBound} to obtain
\begin{equation}
\frac{\lambda}{\pi} \left\vert \int_{\partial \Omega} \left( \Lambda_{V_1} - \Lambda_{V_2} \right) [f_1] f_2 \right\vert \leq C \lambda e^{2 \lambda d^2} \enorm{\Lambda_{V_1} - \Lambda_{V_2}} \left( 1 + \kappa^4 \right). \label{boundaryIntegralBound2}
\end{equation}
Inserting \eqref{statPhase2}, \eqref{tCrossBound}, \eqref{tProdBound}
and \eqref{boundaryIntegralBound2} into \eqref{initial2}, and noting
that $\lambda < e^{\lambda d}$ for $\lambda$ sufficiently large,
leads to
\begin{eqnarray}
\abs{V_1(x) - V_2(x)} \leq C_x \ \lambda^{1-s/2} \ (\kappa + \kappa^3) + \, C \enorm{\Lambda_{V_1} - \Lambda_{V_2}} e^{3 \lambda d^2} ( 1 + \kappa^4). \nonumber
\end{eqnarray}
Taking $\lambda$ as in \eqref{lambdaValue} 
we obtain
\begin{eqnarray}
\abs{V_1(x) - V_2(x)} &\leq& C_x \left( \frac{-1}{6 d^2} \ln \enorm{\Lambda_{V_1} - \Lambda_{V_2}} \right)^{1-s/2} (1 + \kappa^3)  \nonumber \\
&& + \, C  \enorm{\Lambda_{V_1} - \Lambda_{V_2}}^{1/2} ( 1 + \kappa^4) \nonumber
\end{eqnarray}
where the second term 
can be omitted for $\enorm{\Lambda_{V_1} - \Lambda_{V_2}}$ 
small enough, concluding the proof.
\end{proof}

We now provide a link to the scattering amplitude. We adapt the proof 
of Stefanov (see \cite{S90}) to the two-dimensional case. 
Due to the severe ill-posedness of the problem, we need  to introduce a norm 
for the scattering amplitude which penalizes the higher components 
of the frequency spectrum.
Let $V$ be a potential supported on the unit disk, then we define the norm 
for its scattering amplitude at a fixed energy $k^2$ as
$$ \norm{A_V}{k} = \left( \sum_{n,m \in \mathbb{Z}} \left( \frac{3 + 3 \abs{n}}{k} \right)^{2 \abs{n}} \left( \frac{3 + 3 \abs{m}}{k} \right)^{2 \abs{m}} \abs{ a^{(n,m)}_V }^2 \right)^{1/2}, $$
where $a^{(n,m)}_V$ are the Fourier coefficients of $A_V$
$$ A_V (\eta, \theta) = \sum_{n,m \in \mathbb{Z}} a^{(n,m)}_V e^{i n \eta + i m \theta}. $$
Before passing to the proof, we define the single layer potential operator
$$ \mathcal{S}_V [f] (x) = \int_{\partial \Omega} G_V(x, y) f(y) dy, $$
where $G_V$ is the outgoing Green's function which satisfies
$$(- \Delta + V - k^2) \, G_V(x, y) = \delta (x - y).$$ 

\begin{lemma} \label{scatToDtnStability}
Let $V_1, V_2$ be two potentials supported in the unit disk. Then there exists a constant
$C_k = C(k)$ such that
\begin{equation}
\norm{\Lambda_{V_1} - \Lambda_{V_2}}{H^{1/2}(\mathbb{S}^1) \rightarrow H^{-1/2}(\mathbb{S}^1)} \leq C_k \norm{A_{V_1} - A_{V_2}}{k}. \nonumber
\end{equation}
\end{lemma}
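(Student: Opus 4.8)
The plan is to relate both the Dirichlet-to-Neumann maps and the scattering amplitudes through boundary integral operators on $\partial\Omega = \mathbb{S}^1$, following Stefanov's strategy. The central identity is that the scattering amplitude $A_V$ determines, and is determined by, the single layer potential $\mathcal{S}_V$ restricted to the boundary, because the Green's function $G_V$ and its free counterpart $G_0$ are related via a Lippmann--Schwinger type identity, and the scattering amplitude encodes the leading far-field asymptotics of $G_V(x,y) - G_0(x,y)$. Concretely, one expands the kernel of $G_{V_1} - G_{V_2}$ on $\partial\Omega$ in the angular Fourier basis $e^{in\eta}e^{im\theta}$; the Fourier coefficients of this difference are, up to explicit Hankel-function factors $H^{(1)}_n(k)$, $H^{(1)}_m(k)$, exactly the coefficients $a^{(n,m)}_{V_1} - a^{(n,m)}_{V_2}$. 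The weight $\big((3+3|n|)/k\big)^{2|n|}$ in the definition of $\norm{\cdot}{k}$ is chosen precisely to dominate the reciprocal growth $|H^{(1)}_n(k)|^{-1}$ of the Hankel functions as $|n|\to\infty$ (using the bound $|H^{(1)}_n(k)| \gtrsim (c|n|/k)^{|n|}$ for $|n|$ large), so that $\norm{\mathcal{S}_{V_1} - \mathcal{S}_{V_2}}{H^{-1/2}\to H^{1/2}}$ is controlled by $C_k\norm{A_{V_1}-A_{V_2}}{k}$.

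The second half of the argument passes from the single layer operator to the DtN map. On the unit disk one has the classical representation $\Lambda_{V} = \big(\mathcal{S}_V\big)^{-1}\big(\tfrac12 I + K_V^*\big)$ or, more robustly, one compares $\Lambda_{V_1} - \Lambda_{V_2}$ directly using the resolvent identity for the single layer operators and the fact that $\Lambda_0$ is a fixed elliptic, invertible operator $H^{1/2}\to H^{-1/2}$. The step I would carry out is: write $\Lambda_{V_j}$ in terms of $\mathcal{S}_{V_j}$ and the jump relations for the single and double layer potentials, subtract, and estimate each resulting term using that $\mathcal{S}_{V_j}$ differs from $\mathcal{S}_0$ by a smoothing operator (since $G_{V_j} - G_0$ has a less singular kernel than $G_0$ itself). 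This reduces everything to bounding $\mathcal{S}_{V_1} - \mathcal{S}_{V_2}$ in the appropriate operator norm, which is what the Fourier/Hankel estimate of the first paragraph supplies. Throughout, $k^2$ being a non-Dirichlet-eigenvalue for both $-\Delta+V_1$ and $-\Delta+V_2$ guarantees invertibility of the relevant boundary operators, so the constant $C_k$ is finite.

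The main obstacle I expect is the careful bookkeeping of the Hankel-function asymptotics and verifying that the specific constants $3 + 3|n|$ in the weight are large enough to absorb \emph{all} the error terms — not merely the leading Hankel growth but also the polynomial-in-$n$ factors coming from the Sobolev norms $H^{\pm 1/2}(\mathbb{S}^1)$ (where the $H^{s}$ norm of $e^{in\theta}$ is $(1+n^2)^{s/2}$) and from the derivatives of $G_V$ needed to form the normal-derivative trace defining $\Lambda_V$. One must check that the map $\eta\mapsto H^{(1)}_n(k)$ never vanishes for the relevant range (true since $k^2$ is not a resonance, and in any case $H^{(1)}_n$ has no real zeros), and that the series defining $\norm{\cdot}{k}$ converging forces the corresponding kernel to be smooth enough for the layer-potential manipulations to be legitimate. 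The potential-theoretic identities themselves are standard; the delicate point is purely the quantitative matching of weights, and I would handle it by splitting into $|n| \le Ck$ and $|n| > Ck$ and using the uniform asymptotic (Debye) expansion of $H^{(1)}_n(k)$ in the latter regime.
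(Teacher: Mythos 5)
Your proposal follows essentially the same route as the paper: the DtN difference is reduced to $\mathcal{S}_{V_1}-\mathcal{S}_{V_2}$ via Nachman's identity $\Lambda_{V_j-k^2}=\mathcal{S}_{V_j}^{-1}$ together with the resolvent factorization $\mathcal{S}_{V_1}^{-1}-\mathcal{S}_{V_2}^{-1}=\mathcal{S}_{V_1}^{-1}(\mathcal{S}_{V_2}-\mathcal{S}_{V_1})\mathcal{S}_{V_2}^{-1}$ and the $H^{-1/2}\to H^{1/2}$ invertibility of $\mathcal{S}_V$, and then $G_{V_1}-G_{V_2}$ is expanded in the angular Fourier basis with coefficients $a^{(n,m)}_{V_1}-a^{(n,m)}_{V_2}$ times $H^{(1)}_n(k)H^{(1)}_m(k)$ and matched against the weighted norm, exactly as in the paper (which quotes this expansion and the Hankel bound from the cited scattering paper). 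One small correction: since the Green's function coefficients are the scattering coefficients \emph{multiplied} by the Hankel factors, the weight $\bigl((3+3|n|)/k\bigr)^{2|n|}$ must dominate the growth of $|H^{(1)}_n(k)|^2$ itself, which blows up like $\bigl(|n|!\,(3/k)^{|n|}\bigr)^2$ for fixed $k$, rather than the reciprocal $|H^{(1)}_n(k)|^{-1}$ as you wrote; with that direction fixed, your argument is the paper's.
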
 

\begin{proof}
Using Nachman's formula \cite[Theorem 1.6]{Na88} we have
\begin{eqnarray}
\Lambda_{V_1 - k^2} - \Lambda_{V_2 - k^2} &=& \mathcal{S}_{V_1}^{-1} - \mathcal{S}_{V_2}^{-1} \nonumber \\
&=& \mathcal{S}_{V_1}^{-1} \left( \mathcal{S}_{V_2} - \mathcal{S}_{V_1} \right) \mathcal{S}_{V_2}^{-1}. \nonumber
\end{eqnarray}
As $\mathcal{S}_V$ is a bounded and invertible mapping from $H^{-1/2}(\mathbb{S}^1)$ 
to $H^{1/2}(\mathbb{S}^1)$ (see \cite[Proposition A.1]{IN95}), we have
\begin{equation}
\norm{\Lambda_{V_1 - k^2} - \Lambda_{V_2 - k^2}}{H^{1/2}(\mathbb{S}^1) \rightarrow H^{-1/2}(\mathbb{S}^1)} 
\le C \norm{\mathcal{S}_{V_1} - \mathcal{S}_{V_2}}{H^{-1/2}(\mathbb{S}^1) \rightarrow H^{1/2}(\mathbb{S}^1)}. \nonumber
\end{equation}

Letting $B_x = (1 - \Delta_x)^{1/4}$, we write
\begin{eqnarray}
&& \norm{\mathcal{S}_{V_1} - \mathcal{S}_{V_2}}{H^{-1/2}(\mathbb{S}^1) \rightarrow H^{1/2}(\mathbb{S}^1)} \nonumber \\
&& = \sup_{\enorm{f} = 1} \norm{ \int_{\mathbb{S}^1} (G_{V_1} (x, y) - G_{V_2}(x, y)) f(y) \, dy }{H^{1/2}(\mathbb{S}^1)} \nonumber \\
&& = \sup_{\enorm{f} = 1} \left( \int_{\mathbb{S}^1} \left( B_x \int_{\mathbb{S}^1} (G_{V_1} (x, y) - G_{V_2}(x, y)) f(y) \, dy \right)^2 dx \right)^{1/2} \nonumber \\
&& = \sup_{\enorm{f} = 1} \left( \int_{\mathbb{S}^1} \left( \int_{\mathbb{S}^1} B_x (G_{V_1} (x, y) - G_{V_2}(x, y)) f(y) \, dy \right)^2 dx \right)^{1/2} \nonumber
\end{eqnarray}
by Pareseval's identity we have
\begin{equation}
= \sup_{\enorm{f} = 1} \left( \int_{\mathbb{S}^1} \left( \int_{\mathbb{S}^1} \left( B_y B_x (G_{V_1} (x, y) - G_{V_2}(x, y)) \right) B_y^{-1} f(y) dy \right)^2 dx \right)^{1/2} \nonumber
\end{equation}
using Minkowski's integral inequality we get
\begin{eqnarray}
&&\leq \sup_{\enorm{f} = 1} \int_{\mathbb{S}^1} \left( \int_{\mathbb{S}^1} \left( \left( B_y B_x (G_{V_1} (x, y) - G_{V_2}(x, y)) \right) B_y^{-1} f(y) \right)^2 dx \right)^{1/2} dy  \nonumber \\
&&= \sup_{\enorm{f} = 1} \int_{\mathbb{S}^1} \abs{B_y^{-1} f(y)} \left( \int_{\mathbb{S}^1} \left( B_y B_x (G_{V_1} (x, y) - G_{V_2}(x, y)) \right)^2 dx \right)^{1/2} dy  \nonumber 
\end{eqnarray}
and using the Cauchy-Schwarz inequality
\begin{eqnarray}
&& \leq \sup_{\enorm{f} = 1} \norm{f}{H^{-1/2}(\mathbb{S}^1)} \norm{G_{V_1} - G_{V_2}}{H^{1/2}(\mathbb{S}^1) \otimes H^{1/2}(\mathbb{S}^1)} \nonumber \\
&& = \norm{G_{V_1} - G_{V_2}}{H^{1/2}(\mathbb{S}^1) \otimes H^{1/2}(\mathbb{S}^1)}. \nonumber
\end{eqnarray}

From \cite[Theorem 2.2]{AFR14} we know that
\begin{eqnarray}
G_{V_1} (x, y) - G_{V_2} (x, y) &=& \sum_{n,m \in \mathbb{Z}} \frac{(-1)^n}{16} i^{n+m} \left( a^{(n,m)}_{V_1} - a^{(n,m)}_{V_2} \right) \nonumber \\ 
&& \times \, H^{(1)}_n(k \abs{x}) \, H^{(1)}_m(k \abs{y}) \, e^{i n \phi_x + i m \phi_y} \nonumber
\end{eqnarray}
where $H^{(1)}$ denotes the Hankel function of the first kind.
Now, using Parseval's identity and the bound for the Hankel function 
in \cite[Lemma 2.3]{AFR14}, 
there exists $C_k' = C(k)$ such that
\begin{eqnarray}
&& \norm{G_{V_1} - G_{V_2}}{H^{1/2}(\mathbb{S}^1) \otimes H^{1/2}(\mathbb{S}^1)}^2 \nonumber \\
&\leq& \sum_{n,m \in \mathbb{Z}} (1 + n^2)^{1/2} (1 + m^2)^{1/2} \abs{ a^{(n,m)}_{V_1} - a^{(n,m)}_{V_2} }^2 
\abs{H^{(1)}_n(k)}^2 \abs{H^{(1)}_m(k)}^2 \nonumber \\
&\leq& C_k' \sum_{n,m \in \mathbb{Z}} (1 + n^2)^{1/2} (1 + m^2)^{1/2} \abs{ a^{(n,m)}_{V_1} - a^{(n,m)}_{V_2} }^2
{\abs{n}!}^2 {\abs{m}!}^2 \left( \frac{3}{k} \right)^{2 \abs{n} + 2 \abs{m}} \nonumber \\
&\leq& C_k' \sum_{n,m \in \mathbb{Z}} \left( \frac{3 + 3 \abs{n}}{k} \right)^{2 \abs{n}} \left( \frac{3 + 3 \abs{m}}{k} \right)^{2 \abs{m}} 
\abs{ a^{(n,m)}_{V_1} - a^{(n,m)}_{V_2} }^2 \nonumber
\end{eqnarray}
concluding the proof.
\end{proof}

\begin{corollary} \label{scatteringStability}
Let $0 < s-2 <  2r < 1$ and let $V_1, V_2$ be two piecewise $W^{s,1}$ 
potentials  supported on a bounded domain in $\mathbb{R}^2$
such that their discontinuities are close enough in the $C^2$ norm.
Then, for almost every $x \in \Omega$, there exist constants
$C_x = C(x, \cup \partial \Omega_j), C_k = C(k),$ 
such that
\begin{equation}
\abs{V_1(x) - V_2(x)} \leq C_x \big| \ln \left( C_k \norm{A_{V_1} - A_{V_2}}{k} \right) \big|^{1-s/2} ( \kappa + \kappa^3 )  \nonumber
\end{equation}
whenever $\Lambda_{V_1}$ and $\Lambda_{V_2}$ are close enough, where $\kappa = \max \lbrace \norm{V_1}{D^{s,r}}, \norm{V_2}{D^{s,r}}\rbrace$.
\end{corollary}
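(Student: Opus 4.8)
The plan is to chain together the two previously established results. Corollary \ref{scatteringStability} follows from Theorem \ref{dtnStability} and Lemma \ref{scatToDtnStability} in a completely routine way, and the proof is essentially one line; the only mild subtlety is bookkeeping about which quantities are ``close enough'' and ensuring the logarithm is well defined.

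\begin{proof}
By Lemma \ref{scatToDtnStability}, there is a constant $C_k = C(k)$ such that
\begin{equation}
\enorm{\Lambda_{V_1} - \Lambda_{V_2}} \leq C_k \norm{A_{V_1} - A_{V_2}}{k}. \nonumber
\end{equation}
Thus, whenever $\norm{A_{V_1} - A_{V_2}}{k}$ is small enough, $\enorm{\Lambda_{V_1} - \Lambda_{V_2}}$ is as small as needed for Theorem \ref{dtnStability} to apply, and in particular it is smaller than $1$, so that $\big|\ln\enorm{\Lambda_{V_1} - \Lambda_{V_2}}\big| = -\ln\enorm{\Lambda_{V_1} - \Lambda_{V_2}}$ is a positive, decreasing function of $\enorm{\Lambda_{V_1} - \Lambda_{V_2}}$. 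Consequently,
\begin{equation}
\big|\ln\enorm{\Lambda_{V_1} - \Lambda_{V_2}}\big| \leq \big|\ln \left( C_k \norm{A_{V_1} - A_{V_2}}{k} \right)\big| \nonumber
\end{equation}
as soon as the right-hand side is positive, i.e. once $C_k \norm{A_{V_1} - A_{V_2}}{k} < 1$. Since $1 - s/2 < 0$, the map $t \mapsto t^{1-s/2}$ is decreasing on $(0,\infty)$, so applying the bound of Theorem \ref{dtnStability} and then monotonicity gives, for almost every $x \in \Omega$,
\begin{equation}
\abs{V_1(x) - V_2(x)} \leq C_x \big|\ln\enorm{\Lambda_{V_1} - \Lambda_{V_2}}\big|^{1-s/2} ( \kappa + \kappa^3 ) \leq C_x \big| \ln \left( C_k \norm{A_{V_1} - A_{V_2}}{k} \right) \big|^{1-s/2} ( \kappa + \kappa^3 ), \nonumber
\end{equation}
which is the claimed estimate. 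The constant $C_x = C(x, \cup \partial \Omega_j)$ is the one produced by Theorem \ref{dtnStability}, and $C_k$ is the one from Lemma \ref{scatToDtnStability}; both requirements of closeness (of the discontinuities in $C^2$ and of the data) are inherited from those two results.
\end{proof}

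The only thing requiring any care is the direction of the inequalities once the logarithm is introduced: because $\enorm{\Lambda_{V_1}-\Lambda_{V_2}} < 1$ forces $\ln$ to be negative, a smaller DtN distance produces a larger value of $|\ln(\cdot)|$, so the bound of Lemma \ref{scatToDtnStability} must be fed through $|\ln|$ and then through the decreasing power $t^{1-s/2}$ in the correct order. There is no genuine obstacle here; the substantive work was done in Theorem \ref{dtnStability} and Lemma \ref{scatToDtnStability}, and this corollary is simply their composition.
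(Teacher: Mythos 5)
Your overall strategy is exactly the intended one: the paper states this corollary without a written proof, treating it as the immediate composition of Lemma \ref{scatToDtnStability} with Theorem \ref{dtnStability}, and in substance your argument carries that out.

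One displayed inequality in your proof is reversed, however. From $\enorm{\Lambda_{V_1}-\Lambda_{V_2}} \le C_k\norm{A_{V_1}-A_{V_2}}{k} < 1$ and the fact that $t\mapsto|\ln t|=-\ln t$ is \emph{decreasing} on $(0,1)$, one obtains
\[
\big|\ln\enorm{\Lambda_{V_1}-\Lambda_{V_2}}\big| \;\geq\; \big|\ln\big(C_k\norm{A_{V_1}-A_{V_2}}{k}\big)\big|,
\]
not $\leq$ as you wrote. It is precisely this $\geq$ that, when fed through the decreasing map $t\mapsto t^{1-s/2}$ (recall $1-s/2<0$), produces the $\leq$ in your final two-step chain; had your displayed $\leq$ been correct, the power map would flip it back to $\geq$ and the stated bound would not follow. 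Your closing paragraph shows you understand the mechanism (``a smaller DtN distance produces a larger value of $|\ln(\cdot)|$''), and the final chain you write down is the correct one, so this is a transcription slip rather than a conceptual gap --- but as written the intermediate display is false and is inconsistent with the rest of your argument, so it should be corrected. The remaining bookkeeping (smallness of $\norm{A_{V_1}-A_{V_2}}{k}$ forcing smallness of $\enorm{\Lambda_{V_1}-\Lambda_{V_2}}$, and the provenance of $C_x$ and $C_k$) is handled appropriately.
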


\textbf{Acknowledgements:} 
The author thanks  Daniel Faraco and Keith Rogers 
for their valuable comments and corrections
and thanks Evgeny Lakshtanov for bringing \cite{LNV15} to his attention.
This work has been partially supported by ERC-277778 and by ICMAT Severo Ochoa project SEV-2015-0554 (MINECO).


\end{document}